\documentclass{article}
\usepackage{amsfonts, amsthm, amsmath, mathtools, amssymb, graphicx, comment, xcolor, enumitem, mathabx,xfrac, mathrsfs, indentfirst, esint, bm, slashed}
\usepackage{latexsym}
\usepackage{amssymb}
\usepackage{amsxtra}
\usepackage{geometry}
\usepackage{setspace}

\newtheorem{lemma}{Lemma}[section]
\newtheorem{thm}{Theorem}[section]

\newtheorem{defn}{Definition}[section]

\newtheorem{rmk}{Remark}[section]
\newcommand{\ZZ}{\mathbb{Z}}

\newcommand{\R}{\mathbb{R}}
\newcommand{\C}{\mathbb{C}}

\newcommand{\nab}{\nabla}

\newcommand{\W}{\Omega}
\newcommand{\w}{\omega}
\newcommand{\be}{\beta}

\newcommand{\FS}{\mathcal{S}}

\newcommand{\Comp}{\mathfrak{C}}

\newcommand{\pex}{p_{\mathrm{ext}}}
\newcommand{\pext}[1]{p_{\mathrm{ext},#1}}

\newcommand{\vel}{\mathbf{v}}

\newcommand{\x}{\mathbf{x}}

\newcommand{\E}{\mathcal{E}}
\newcommand{\al}{\alpha}

\newcommand{\y}{\gamma}

\newcommand{\pr}{\prime}

\newcommand{\un}{\mathbf{\hat{n}}}

\newcommand{\vphi}{\varphi}

\newcommand{\half}{\sfrac{1}{2}}

\newcommand{\D}{\Delta}
\newcommand{\de}{\delta}
\newcommand{\chiw}{\chi_{\w}}

\newcommand{\ee}{\varepsilon}
\newcommand{\g}{\mathbf{g}}

\newcommand{\sig}{\sigma}

\newcommand{\tta}{\theta}

\newcommand{\p}{\partial}

\newcommand{\abs}[1]{\left\lvert#1\right\rvert}

\newcommand{\set}[1]{\left\{#1\right\}}

\newcommand{\T}{\mathbb{T}}
\newcommand{\N}{\mathbb{N}}

\newcommand{\norm}[1]{\left\|#1\right\|}
\newcommand{\comm}[2]{\left[#1,#2\right]}

\newcommand{\Dist}{\mathcal{D}^\pr}

\newcommand{\hnorm}[2]{\norm{#1}_{\dot{H}^{#2}}}

\def\avint{\mathop{\mathchoice{\,\rlap{-}\!\!\int}
                              {\rlap{\raise.15em{\scriptstyle -}}\kern-.2em\int}
                              {\rlap{\raise.09em{\scriptscriptstyle -}}\!\int}
                              {\rlap{-}\!\int}}\nolimits}
                              
\newcommand{\Norm}[2]{\norm{#1}_{H^{#2}}}
\newcommand{\vbar}{v^*}

\newcommand{\Lip}{\mathrm{Lip}}
\newcommand{\pnorm}[2]{\norm{#1}_\Lp{#2}}
\newcommand{\Ec}{\E_{\mathrm{cap}}}
\newcommand{\Eg}{\E_{\mathrm{grav}}}
\newcommand{\Eca}[1]{\E_{\mathrm{cap},#1}}
\newcommand{\Egr}[1]{\E_{\mathrm{grav},#1}}

\DeclareMathOperator{\Rea}{\mathfrak{Re}}

\DeclareMathOperator{\FT}{\mathcal{F}}

\DeclareMathOperator{\bigo}{\mathcal{O}}

\DeclareMathOperator{\id}{\mathrm{id}}

\DeclareMathOperator{\diver}{\mathrm{div}}
\DeclareMathOperator{\curl}{\mathrm{curl}}
\DeclareMathOperator{\supp}{\mathrm{supp}}
\DeclareMathOperator{\Op}{\mathrm{Op}}

\newcommand{\Lp}[1]{{L^{#1}}}

\newcommand{\Int}{\int_0^{2\pi}}

\let\originalleft\left
\let\originalright\right
\renewcommand{\left}{\mathopen{}\mathclose\bgroup\originalleft}
\renewcommand{\right}{\aftergroup\egroup\originalright}

\numberwithin{equation}{section}

\title{A Toy Model for Damped Water Waves}
\author{Gary Moon}
\date{}

\begin{document}
\maketitle

\begin{abstract}
We consider a toy model for a damped water waves system in a domain $\W_t \subset \T \times \R$. The toy model is based on the paradifferential water waves equation derived in the work of Alazard-Burq-Zuily \cite{ABZ1, ABZ3}. The form of damping we utilize is a modified sponge layer proposed for the $3d$ water waves system by Clamond, et.\@ al.\@ in \cite{ClaEtAl}. We show that, in the case of small Cauchy data, solutions to the toy model exhibit a quadratic lifespan. This is done via proving energy estimates with the energy being constructed from appropriately chosen vector fields.
\end{abstract}

\section{Introduction}

\let\thefootnote\relax\footnote{The author was partially supported by NSF CAREER Grant DMS-1352353 and NSF Applied Math Grant DMS-1909035.}

We study a toy model for damped (irrotational) water waves in two spatial dimensions. Recall that the irrotational water waves system is given by the incompressible, irrotational Euler equations coupled with a kinematic and a dynamic boundary condition on the free surface (i\@.e\@., the free-surface Euler equations). We will work on the torus $\T \coloneqq \R/2\pi\ZZ$, which is equivalent to imposing periodic (horizontal) boundary conditions, but one can also choose to work on the real line which necessitates imposing appropriate decay conditions at infinity. Letting $\W_t$ denote the fluid domain and $\FS_t$ the air-water interface, we have
\begin{equation}
\label{eqn:Free-SurfaceEuler}
\begin{cases} \p_t\vel + (\vel\cdot\nab)\vel = -\nab\frac{p}{\rho_0} + \g & \text{in } \W_t\\ \diver \vel = 0 & \text{in } \W_t\\ \curl \vel = 0 & \text{in } \W_t\\ \p_t + \vel\cdot\nab \text{ is tangent to } \bigcup_t \FS_t \times \set{t} \subset \T_x \times \R_y \times \R_t\\ p = -\tau H(\eta) & \text{on } \FS_t \end{cases}.
\end{equation}
In \eqref{eqn:Free-SurfaceEuler}, $\vel$ is the fluid velocity, $p$ denotes the pressure, $\rho_0$ is the constant density, $\g \coloneqq (0,-g)$ with $g$ being acceleration due to gravity, $\tau$ is the coefficient of surface tension, $\eta$ is a function describing the location of the free surface, $H(\eta)$ is the mean curvature of the free surface. We shall henceforth, by rescaling, assume unity density $\rho_0 = 1$. We shall take $\W_t \subset \T \times \R$ to be given by
\begin{equation}
\label{eqn:FluidDomain}
\W_t \coloneqq \set{(x,y) \in \T \times \R : -\infty < y < \eta(x,t)},
\end{equation}
so the domain $\W_t$ is of infinite vertical extent. The free surface is, of course, given by
\begin{align}
\FS_t &\coloneqq \mathcal{G}_{\eta,t}, \label{eqn:FreeSurface}
\end{align}
where $\mathcal{G}_{\eta,t} \coloneqq \set{(x,\eta(x,t)): x \in \T}$ denotes the graph of $\eta$ at time $t \geq 0$.

A large part of our motivation for studying damped water waves is due to the importance of damping in the numerical study of water waves. For example, one may want to numerically study the behavior of surface water waves in an effectively unbounded domain (e\@.g\@., waves on the open ocean), but, when running a numerical simulation on a computer, one is necessarily required to truncate the domain. The artificial boundaries introduced by this truncation can cause problems if not handled appropriately. For example, waves can reflect off a component of the boundary and propagate back into the domain, which creates interference.

One effective solution to this problem is to introduce a wave damper near some subset of the boundary to damp the outgoing waves so that they are not reflected back into the wave propagation region. This is, of course, not the only means to solve this problem (e\@.g\@., there are absorbing boundary conditions and perfectly matched layers), however damping can provide a very effective solution that is also quite computationally efficient.

The particular type of damping which we consider here is a form of modified sponge layer proposed by Clamond, et\@. al\@. in \cite{ClaEtAl} for the damping of $3d$ water waves. This damper acts via an external pressure, denoted $\pex$, at the free surface. We shall let $\w \subset [0,2\pi)$ denote the connected interval on which we will damp the fluid, we let $\chi_{\w}$ be a smooth, non-negative cut-off function supported on $\w$ and $\vphi$ denotes the velocity potential. The external pressure is then given by
\begin{equation}
\label{eqn:ClamondDamper}
\pex \coloneqq \p_x^{-1}(\chi_{\w}\p_x\vphi).
\end{equation}
We refer to damping by such $\pex$ as Clamond damping or linear $H^{\half}$ damping. We note that \eqref{eqn:ClamondDamper} is simply the $2d$ analog of the $3d$ damper introduced in \cite{ClaEtAl}:
\begin{equation}
\label{eqn:ClamondDamper3d}
\pext{3d} = \nab^{-1}\cdot(\chi_{3d}\nab\vphi) \ (\text{modulo a Bernoulli constant}).
\end{equation}
We do not explicitly deal with the Bernoulli constant as it is a function of time alone and so will not have any impact on energy estimates. On the other hand, the Bernoulli constant can be quite important computationally. Though in some contexts it can be incorporated into the velocity potential and thus taken to vanish (in fact, this gives a sort of gauge condition enforcing uniqueness of the velocity potential), it cannot generically be set to zero. Generally, the treatment of the Bernoulli constant will depend upon the method one uses to resolve the equations.

It is often preferable not to work directly with the free-surface Euler equations \eqref{eqn:Free-SurfaceEuler}, but to instead reformulate it. In fact, we can reduce \eqref{eqn:Free-SurfaceEuler} to a system on the interface $\FS_t$. We will consider the water waves problem from the point of view of the Zakharov-Craig-Sulem formulation \cite{Z, CS}, however there are many ways that one could reformulate the problem (see Chapter 1 of \cite{Lan1} for a discussion of several different possible formulations of the water waves problem).

Given the assumptions on $\vel$ in \eqref{eqn:Free-SurfaceEuler} (namely, irrotationality), it follows that there exists a scalar potential $\vphi$ such that $\vel = \nab\vphi$. Let $\psi$ denote the trace of the velocity potential $\vphi$ on the interface $\FS_t$ and recall that $\eta$ describes the location of the free surface. Then, $(\eta,\psi)$ solves
\begin{equation}
\label{eqn:ZakharovSystem}
\begin{dcases} \p_t\eta - G(\eta)\psi = 0\\ \p_t\psi + g\eta - \tau H(\eta) + \frac{1}{2}(\p_x\psi)^2 - \frac{1}{2}\frac{(G(\eta)\psi + \p_x\eta\p_x\psi)^2}{1+(\p_x\eta)^2}=-\pex\end{dcases},
\end{equation}
where $\pex$ is given by \eqref{eqn:ClamondDamper}, $G(\eta)$ is the (normalized) Dirichlet-Neumann map given by
\begin{align}
\label{eqn:DNO}
G(\eta)\psi(x,t) &\coloneqq \sqrt{1+(\p_x\eta(x,t))^2}\p_\un\vphi(x,\eta(x,t),t) \nonumber\\
&= \p_y\vphi(x,\eta(x,t),t) - \p_x\eta(x,t)\p_x\vphi(x,\eta(x,t),t)
\end{align}
and we let $\un$ denote the outward unit normal vector field on $\FS_t$. We take $H(\eta)$ to denote the mean curvature of the free surface:
\begin{equation}
\label{eqn:MeanCurvature}
H(\eta) \coloneqq \p_x\left( \frac{\p_x\eta}{\sqrt{1+(\p_x\eta)^2}} \right).
\end{equation}
 
Our toy model will be built from the paradifferential equation for the water waves system. This paradifferential equation originates in the beautiful work of Alazard-Burq-Zuily \cite{ABZ1}, which considers \eqref{eqn:ZakharovSystem} with $\pex \equiv 0$. The paradifferential approach to the study of water waves began with the work of Alazard-M\'{e}tivier on the regularity of three-dimensional diamond waves \cite{AM}. We briefly recall Alazard-Burq-Zuily's result in the context of the $2d$ gravity-capillary water waves system, referring to \cite{ABZ1} for the details. Let $(V,B)$ be the trace of $\nab\vphi$ along the free surface. Then, we have the following paradifferential equation for the water waves system:
\begin{equation}
\label{eqn:ParadifferentialWaterWavesEqn}
\p_t U + T_V\p_xU + iT_\y U = f.
\end{equation}
The unknown $U$ is
\begin{equation}
\label{eqn:ParadifferentialUnknown}
U \coloneqq \Comp\left( S\begin{pmatrix} \eta\\ u \end{pmatrix} \right),
\end{equation}
where $\Comp: (a,b) \mapsto a + ib$, $S$ is a paradifferential symmetrizer and $u$ is the good unknown of Alinhac \cite{Ali1, Ali2}: $u \coloneqq \psi - T_B\eta$. The paradifferential operator $T_\y$ incorporates the principal part of the mean curvature operator $H(\cdot)$ and the Dirichlet-Neumann map $G(\eta)$. The corresponding symbol $\y=\y(x,\xi,t)$ is an elliptic symbol of order $\frac{3}{2}$. The right-hand side $f$ consists of smooth(er) remainder terms, which satisfy good estimates. A similar result can be obtained for the gravity water waves system, however in that case $\y$ is of order $\frac{1}{2}$ \cite{ABZ3}.

\subsection{Some Existing Results for the Water Waves System}

The earliest work on the well-posedness of the water waves system made quite restrictive assumptions on the Cauchy data and the geometry. Many early works considered analytic data and analytic geometry. For example, Kano-Nishida proved local well-posedness of the gravity water waves system under the assumption that the initial data is analytic and the bottom is flat \cite{KN}. Other early works assumed small, perturbative data (in Sobolev spaces) and a small, perturbative bottom in the finite-depth case (e.g., \cite{Nal}). 

Local-in-time well-posedness, in the Sobolev space framework, of the full water waves system (with no perturbative assumptions) goes back to Beyer and G\"{u}nther \cite{BG} (capillary waves) and Wu \cite{Wu2} (gravity waves). The corresponding result for gravity-capillary water waves was obtained by Iguchi \cite{Igu1}. These results have been extended in numerous ways over the past 20+ years - for example, incorporating (non-small) topography, allowing the flow to have non-vanishing vorticity in the bulk of the fluid and allowing for rough Cauchy data \cite{CoSh1, Lan2, ABZ1, HIT1} and one can consult the references therein to delve deeper in any given direction. A good exposition of the water waves problem and related issues, including well-posedness, that is rather current can be found in \cite{Lan1}.

A question related to local well-posedness, which has been the focus of a substantial amount of research and is the subject of this paper, regards the lifespan of solutions to the water waves problem, generally in the small-data setting (i.e., Cauchy data of size $\ee \ll 1$), and the long-time behavior of solutions. The water waves system exhibits a quadratic nonlinearity, which suggests a lifespan at least on the order of $\frac{1}{\ee}$ by the classical theory of quasilinear hyperbolic equations \cite{Kato1, Kato2}.

In \cite{HIT1}, Hunter-Ifrim-Tataru apply their ``modified energy method'' to the gravity water waves system. The main idea behind the method is to use a normal form transformation to construct a modified energy functional which satisfies cubically nonlinear estimates. They thereby show that the $2d$ gravity water waves system, formulated in holomorphic coordinates, is locally-in-time well-posed on cubic (i.e., $\bigo(\frac{1}{\ee^2})$) timescales. The results of \cite{HIT1} have been extended in several directions (e.g., most recently to gravity waves with constant vorticity \cite{IT3}). For some interesting results on long-time existence for periodic gravity-capillary water waves see \cite{IonPus5}.

Other works consider the lifespan of solutions to the water waves system, but measured in terms of various dimensionless constants encoding important properties of the flow as opposed to the size of the Cauchy data. Such work is generally undertaken with an eye toward the rigorous justification of various simplified models for water waves in asymptotic regimes (e.g., KdV, Green-Naghdi and so forth). Some commonly used dimensionless constants include
\begin{equation}
\label{eqn:DimensionlessConsts}
\epsilon \coloneqq \frac{a}{H}, \ \mu \coloneqq \frac{H^2}{L^2}, \ \be \coloneqq \frac{b}{H},
\end{equation}
where $a$ is the order of amplitude of the free surface waves, $H$ is the characteristic water depth, $L$ is the characteristic wavelength in the longitudinal direction and $b$ is the order of amplitude of the bathymetric variations. It is common to call $\epsilon$ the nonlinearity parameter, $\mu$ the shallowness parameter and $\be$ the topography parameter. In \cite{AlvLan1}, it was shown that solutions to the water waves system persist on timescales of order $\bigo(\frac{1}{\be\vee\epsilon})$. The main tools included a detailed study of the Dirichlet-Neumann map and a Nash-Moser iteration scheme. This was improved by M\'{e}sognon-Gireau, who proved in \cite{BMG1} that solutions have a lifespan of order $\bigo(\frac{1}{\epsilon})$. The analysis of \cite{BMG1} is of particular interest to us due to the author's use of the commuting vector field $\epsilon\p_t$.

Intimately related to the lifespan of solutions is the existence of global or almost-global solutions starting from small, localized, smooth Cauchy data. Of course, if one has a global solution to the water waves system, it is natural to ask about the long-time asymptotic behavior of the solution, such as whether the solution scatters to a linear solution as to $\to +\infty$. Global existence results are generally easier to prove in $3d$ than in $2d$ due to the better decay in higher dimensions. Additionally, these results are more easily obtained for the gravity water waves system than for the gravity-capillary or capillary water waves system. Here, the difference is related to the dispersion relation and resonant interactions. In three dimensions, global solutions have been shown to exist for the gravity, capillary and gravity-capillary problems by various authors. For example, in \cite{GMS2}, Germain-Masmoudi-Shatah study the $3d$ capillary water waves system, using their celebrated method of space-time resonances to prove the existence of a global solution and to show that said solution scatters to a solution of the linearized system as $t \to +\infty$. For the $2d$ case, global existence is known for gravity water waves and capillary water waves. For example, Alazard-Delort prove global regularity of gravity water waves and a modified scattering via an approach utilizing paradifferential normal forms and Klainerman vector fields \cite{AD2} On the other hand, to the best of the  author's knowledge, the global regularity problem for the $2d$ gravity-capillary water waves system is still open and the best result is the almost-global well-posedness result of Berti-Delort \cite{BD1}.

\subsection{Some Existing Results on the Damped Water Waves Problem and Stabilization/Control of the Water Waves System}

When we refer to damping water waves, we are referring to the application of a sponge layer or numerical beach; that is, an artificial, dissipative term supported near the boundary that removes energy from the system. However, in the literature, there are other systems which are referred to as models for damped water waves. We will briefly give an overview of some of this material and ultimately discuss how it differs from the damping we consider here.

We begin with the damped Euler equations:
\begin{equation}
\label{eqn:DampedEulerEqn}
\begin{dcases}
\p_t\vel + (\vel\cdot\nab)\vel + a\vel = -\nab \frac{p}{\rho_0} + \mathbf{f}\\
\diver\vel = 0
\end{dcases},
\end{equation}
where $a > 0$ is the damping coefficient and $\mathbf{f}$ represents any body forces acting on the flow. If one wants to study the corresponding free boundary problem, the kinematic and dynamic boundary conditions are the same. To see that $a\vel$ is indeed dissipative, consider the energy of the gravity water waves system corresponding to the damped Euler equations:
\begin{equation}
\label{eqn:DampedEulerGravEnergy}
\frac{d}{dt}\left( \frac{1}{2}\int_{\W_t} \abs{\vel}^2 \ d\x dy + \frac{g}{2}\int_{\FS_t}\abs{\eta}^2 \ d\x \right) = -a\int_{\W_t} \abs{\vel}^2 \ d\x dy,
\end{equation}
where $\x$ denotes the horizontal coordinates and $y$ the vertical coordinates. The gravity water waves system for the damped Euler equations was considered by Lian in \cite{Lian1} and shown to be globally well-posed.

It is well-known that viscosity is physically dissipative. One can readily see this mathematically by comparing the Euler and Navier-Stokes equations. As such, it is reasonable to suppose that somehow adding viscosity into the water waves system would provide a damping effect. One must then decide how to go about incorporating viscous effects. For example, one might decide to just consider the free-surface Navier-Stokes equations. However, one disadvantage of doing this is that one no longer has a velocity potential and this elucidates a substantial obstruction: generally speaking, viscosity is not compatible with potential flow and the existence of a velocity potential is critical for many of the formulations of the water waves problem (this is also the reason why the study of water waves with vorticity has such a different character). Thus, if one wants to retain the many nice features of potential flows while incorporating the effects of viscosity, then one must consider various ways to add ``artificial viscosity''. One well-studied model of this form is the Dias-Dyachenko-Zakharov (DDZ) model, which, for $2d$ gravity waves over a fluid of infinite depth is
\begin{equation}
\label{eqn:DDZ}
\begin{dcases}
\D\vphi = 0 &\text{in } \W_t\\
\p_t\eta = \p_y\vphi + 2\nu\p_x^2\vphi - \p_x\eta\p_x\vphi &\text{on } \FS_t\\
\p_t\vphi = -\frac{1}{2}\abs{\nab\vphi}^2 - 2\nu\p_y^2\vphi - g\eta &\text{on } \FS_t\\
\p_y\vphi \to 0 &\text{as } y \to - \infty
\end{dcases},
\end{equation}
where $\nu$ is the coefficient of viscosity \cite{DDZ1}. The DDZ model is also known to be globally well-posed \cite{NgNi1}.

Both of the above might reasonably be referred to as models for damped water waves. Indeed, solutions to the free-surface damped Euler equations decay to equilibrium almost exponentially \cite{Lian1} and solutions to the DDZ system decay to equilibrium exponentially in time \cite{NgNi1}. Nevertheless, the damping considered in these models is quite different from the damping which we consider here. Recall that, as noted above, we are largely interested in damping that can be utilized in the numerical simulation of water waves. Hence, we would like the waves to be able to propagate freely in the majority of the domain and only be damped near the artificial boundary in order to avoid spurious reflections. In this respect, the above models are not appropriate as the fluid is damped on the whole domain. Of course, that is not to say that these models cannot be adapted to that purpose and, in fact, it is easy to see how one could localize the effect of the damping to a desired region. Indeed, this gives rise to fascinating questions for future research. For example, suppose we take $\nu = \nu(x)$ with $\nu$ localized. How does this damper perform? Does it stabilize the water waves system? If so, how rapidly do solutions converge to equilibrium? As interesting as these questions are, at least to this author, they are ultimately beyond the scope of this work and we will not address them further.

There is a vast literature regarding numerical aspects of the damped water waves problem (e\@.g\@., numerically evaluating the performance of various dampers). For just a few examples, the interested reader may consult \cite{BMO1, Bonn1, ClaEtAl, Clem1, Duc1, IsrOrs1, JenEA1} as well as the references therein. However, the literature regarding the analytic study of the damped water waves problem is quite a bit more limited. We reiterate that, by damped, we mean an artificial dissipative term whose effect is localized (e.g., a sponge layer).

Alazard's papers on the stabilization of the water waves system are an important exception \cite{Ala2, Ala3}. In \cite{Ala2}, the damper 
\begin{equation}
\label{eqn:HamiltonianDamper}
\pext{1} = \chi_1G(\eta)\psi
\end{equation}
is considered, where $\chi_1$ is a cut-off function. This is a commonly used damper and a natural choice upon viewing the water waves system as being governed by a Hamiltonian energy. For, consider the Hamiltonian energy of the water waves system:
\begin{equation}
\label{eqn:HamiltonianEnergy}
\mathcal{H} = \frac{g}{2}\int_0^{2\pi} \eta^2 \ dx + \tau \int_0^{2\pi} \sqrt{1 + \eta_x^2} - 1 \ dx + \frac{1}{2}\int_0^{2\pi} \psi G(\eta)\psi \ dx.
\end{equation}
By using the Hamiltonian equations, one can show that
\begin{equation}
\label{eqn:HamiltonianDamperDissipation}
\frac{d\mathcal{H}}{dt} = \int_0^{2\pi} \frac{\de\mathcal{H}}{\de\eta}\p_t\eta + \frac{\de\mathcal{H}}{\de\psi}\p_t\psi \ dx = -\int_0^{2\pi} \p_t\eta\cdot\pext{1} \ dx = -\int_0^{2\pi} \chi_1(G(\eta)\psi)^2 \ dx \leq 0.
\end{equation}
So, one easily sees that $\pext{1}$ removes energy from the system. The main result of Alazard in \cite{Ala2} is that this damper actually stabilizes the $2d$ gravity-capillary water waves system; in particular, the energy decays to zero exponentially in time.

Alazard obtains an analogous result for the two-dimensional gravity water waves system in \cite{Ala3}. In that case, the external pressure is taken to satisfy
\begin{equation}
\label{eqn:GravDamper}
\pext{2}(x,t) = \p_x^{-1}\left(\chi_2(x)\int_{-h}^{\eta(x,t)} \vphi_x(x,y,t) \ dy\right),
\end{equation}
where $\chi_2$ is again a cut-off function. The damper \eqref{eqn:GravDamper} may appear a bit unnatural, particularly compared to the simple damper \eqref{eqn:HamiltonianDamper}. The main obstruction to simply utilizing the Hamiltonian damper \eqref{eqn:HamiltonianDamper} is in difficulties showing that the Cauchy problem is well-posed, except in the linearization. To see why one might choose a damper of the form \eqref{eqn:GravDamper}, consider the Hamiltonian energy for the gravity water waves system which is just \eqref{eqn:HamiltonianEnergy} with $\tau = 0$. Then, computing the derivative with respect to time as in \eqref{eqn:HamiltonianDamperDissipation}, we have
\begin{equation}
\label{eqn:GravHamiltonianDerivative}
\frac{d\mathcal{H}}{dt} = -\int_0^{2\pi} \pext{2}G(\eta)\psi \ dx.
\end{equation}
We can now apply the divergence theorem to the right-hand side to obtain:
\begin{equation*}
\frac{d\mathcal{H}}{dt} = \iint_{\W_t} \diver_{x,y}(\pext{2}\nab_{x,y}\vphi) \ dydx = -\int_0^{2\pi} (\p_x\pext{2})\overline{V} \ dx,
\end{equation*}
where $\overline{V}(x,t) \coloneqq \int_{-h}^{\eta(x,t)} \vphi_x(x,y,t) \ dy$. We now see that choosing $\pext{2}$ of the form \eqref{eqn:GravDamper} will force the energy to dissipate. See \cite{Ala3} for all of the details of the above. The main result of Alazard in \cite{Ala3} is that such a $\pext{2}$ stabilizes the water waves system with the energy decaying to zero exponentially in time.

The question of stabilizability of the water waves equations belongs to the broader field of control theory for water waves. Within control theory, stabilizability is one of three closely related concepts, the other two being controllability and observability. Controllability is also an important question in the numerical simulation of water waves as it is intimately related to the generation of waves via wave makers. As was the case for the problem of stabilizability and damping, there are relatively few analytical results on the control theory of the full (nonlinear) water waves system.

In \cite{ABH}, Alazard, Baldi and Han-Kwan prove that the periodic $2d$ gravity-capillary water waves system is locally exactly controllable in arbitrarily short time, subject to a smallness constraint. This controllability result was extended to higher dimensions in \cite{Zhu1} under the assumption that the control domain $\w \subset \T^d$ satisfies the geometric control condition (GCC) of Rauch-Taylor. The GCC is a very natural requirement on the control domain and was implicit in the $2d$ control result of \cite{ABH}: any control domain $\w \subset \T$ satisfies the GCC. For more on the GCC, see \cite{RT1,BLR1}. Regarding observability of water waves, Alazard proves the boundary observability of the gravity water waves system in $2d$ and $3d$, where the fluid domain is taken to be a rectangular tank bounded by a flat bottom, vertical walls and a free surface \cite{Ala8}. Boundary observability implies that one can control the energy of the system via measurements at the boundary (i\@.e\@., where the free surface meets the vertical walls).

\subsection{The Toy Model for the Damped Water Waves System}

We want to consider a damped form of \eqref{eqn:ParadifferentialWaterWavesEqn}. This leads us to consider the following toy model for the (two-dimensional) water waves system subject to Clamond damping:
\begin{equation}
\label{eqn:ClamondToy}
\begin{dcases} \p_t U + W(U)\p_xU + iL U + \chi_{\w} U = 0\\ U(t=0) = U_0 \in H^\sig \end{dcases}.
\end{equation}
In the above, the unknown $U: \T \to \C$ and $L$ is defined by
\begin{equation}
\label{eqn:LDef}
L \coloneqq \abs{D}^\al \text{ for } \al \in (0,2].
\end{equation}
Observe that $L$ corresponds to the paradifferential operator $T_\y$ in the full paradifferential model \eqref{eqn:ParadifferentialWaterWavesEqn}. Finally, $\chiw$ corresponds to the cut-off function in \eqref{eqn:ClamondDamper}. We shall discuss $W(U)$ in short order. Notice that $\al = \frac{3}{2}$ in \eqref{eqn:LDef} corresponds to capillary waves and $\al = \frac{1}{2}$ corresponds to gravity waves. For discussion of the Cauchy problem for a similar toy model see \cite{Ala2}.

Notice that $W(U)$ is the toy model counterpart to the paraproduct operator $T_V$ in \eqref{eqn:ParadifferentialWaterWavesEqn}. For some integer $N \gg 0$, $W$ is continuous from $H^{s-N} \to H^s$:
\begin{equation}
\label{eqn:W_Estimate}
\Norm{W(U)}{s} \lesssim \Norm{\langle D \rangle^{-N}U}{s}.
\end{equation}
Further, $W(\cdot)$ is real-valued, satisfies $W(U)=W(U^*)$ and scales linearly in $U$ (i.e., $W(\cdot)$ is homogeneous of degree one: $W(\lambda U) = \lambda W(U)$ for $\lambda \in \R$). We use $(\cdot)^*$ to denote complex conjugation. Lastly, we shall assume that $W$ commutes with differentiation with respect to time:
\begin{equation}
\label{eqn:WTimeDerivative}
\p_t W(U) = W(\p_t U).
\end{equation}
We justify the reasonableness of this assumption by recourse to the properties of the paraproduct operator $T_V$. Recall that if $a=a(x)\in L^\infty$, it holds that $T_a: H^s \to H^s$ for any $s \in \R$ with the attendant estimate
\begin{equation}
\label{eqn:ParaproductSobolevMapping}
\Norm{T_a u}{s} \lesssim \pnorm{a}{\infty}\Norm{u}{s};
\end{equation}
for example, see \cite{Met1} or \cite{T2}. Given that $\p_t(T_V \p_xU) = T_V\p_t\p_xU + T_{\p_t V}\p_xU$, we will have
\begin{equation*}
\Norm{\p_t(T_V \p_xU)}{s} \lesssim \pnorm{V}{\infty}\Norm{\p_t \p_xU}{s} + \pnorm{\p_t V}{\infty}\Norm{\p_xU}{s},
\end{equation*}
provided that $V$ and $\p_t V$ are $L^\infty$. Finally, since $\p_t \p_xU \approx \abs{D}^\al \p_xU$ (at least when $\al > 1$), we see that $T_{\p_t V}\p_xU$ can be considered a lower-order remainder term. If $\al \leq 1$, we have $\p_t\p_x U \approx \p_x(W(U)\p_xU) + i\abs{D}^\al \p_xU$ and this will still represent the highest-order term.

So, the only remaining question regards the reasonableness of assuming that $V$ and $\p_t V$ are $L^\infty$. We first note that we are not considering rough solutions/data and it would be entirely reasonable to assume that $V$ has enough regularity to enforce the desired inclusion via Sobolev embedding. Nevertheless, we note that in the low-regularity context of \cite{ABZ1}, this assumption is verified. In particular, $V$ is $H^{s-1}$ and $\p_tV$ is $H^{s-2}$ for $s > 2 + \frac{d}{2}$. So, we have just enough regularity to ensure that $V, \p_tV \in L^\infty$ via Sobolev embdedding. The details can be found in \cite{ABZ1, ABZ3}.

We shall be working in the small-data setting and so assume that
\begin{equation}
\label{eqn:SmallData}
\Norm{U_0}{\sig} = \ee \ll 1.
\end{equation}
Our aim will be to show that solutions to \eqref{eqn:ClamondToy} exist on timescale $\bigo(\frac{1}{\ee})$. We will achieve this by rescaling $U$: $U(x,t) = \ee v(x,\ee t)$. Then, our objective will be to simply obtain uniform-in-$\ee$ estimates on the solution $v$ to the equation
\begin{equation}
\label{eqn:ClamondToyScale}
\begin{dcases} \p_t v + W_\ee(v)\p_xv + \frac{i}{\ee}L v + \frac{1}{\ee}\chi_{\w} v = 0\\ v(t=0) = v_0 \in H^\sig \end{dcases},
\end{equation}
where $W_\ee(v) \coloneqq \ee^{-1}W(\ee v)$. Notice that, due to the scaling linearity and \eqref{eqn:W_Estimate}, $W_\ee(v)$ satisfies
\begin{equation}
\label{eqn:W_Estimate2}
\pnorm{W_\ee(v)}{\infty} \lesssim 1 \text{ and } \pnorm{\p_xW_\ee(v)}{\infty} \lesssim 1.
\end{equation}

\section{Main Results and Plan of the Paper}

As noted in the introduction, our objective is to show that solutions to \eqref{eqn:ClamondToy} have an $\bigo(\frac{1}{\ee})$ lifespan. Our approach will be to consider the rescaled equation \eqref{eqn:ClamondToyScale} and to show that solutions have an $\bigo(1)$ lifespan. Our first task will be to define an energy $\E$ for solutions of \eqref{eqn:ClamondToyScale}. This will be done using carefully chosen vector fields. However, the value of $\al$ in \eqref{eqn:LDef} plays a key role in defining a suitable energy. In particular, the order of $L$, compared to the order $1$ nonlinearity $W_\ee(v)\p_xv$, will determine whether $L$ is principal or sub-principal, and this fact plays an important role in determining the analysis necessary to attack the problem. So, the marked difference in analysis is between the cases $\al > 1$ and $\al \leq 1$. However, rather than focusing on these more general cases, we will largely focus in on the cases $\al = \frac{3}{2}$ and $\al = \frac{1}{2}$. In either case, we have the following result:

\begin{thm}
\label{MainTheorem}
Let $v$ be a solution of \eqref{eqn:ClamondToyScale}, where $\al = \frac{3}{2}$ or $\al = \frac{1}{2}$, and suppose that $\sigma$ is sufficiently large. If $\E$ is the appropriate energy of a solution to \eqref{eqn:ClamondToyScale}, then we have
\begin{equation}
\label{eqn:EnergyEstimateForm}
\frac{d\E}{dt} \lesssim \E.
\end{equation}

Detailed statements of this result can be found in Theorem \ref{CapEnergyEstimate1} for the case $\al = \frac{3}{2}$ and in Theorem \ref{GravEnergyEstimate} for the case $\al = \frac{1}{2}$.
\end{thm}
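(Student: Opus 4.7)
The plan is to build $\E$ from a carefully-chosen tuple of ``approximately commuting'' vector fields $Z_1,\ldots,Z_m$ by setting $\E \coloneqq \sum_{\abs{\beta}\le M}\norm{Z^\beta v}_{H^{s'}}^2$ for a suitable order $M$ and Sobolev index $s'$ depending on $\al$, and then differentiating $\E$ in time using the equations satisfied by the $Z^\beta v$. Following the approach of M\'esognon-Gireau referenced in the introduction, the rescaled time derivative $\ee\p_t$ is a natural building block: because $\chi_\w$ and $L$ are time-independent, $\ee\p_t$ commutes exactly with both the dispersion $\frac{i}{\ee}L$ and the damper $\frac{1}{\ee}\chi_\w$---crucially without producing any residual factor of $\ee^{-1}$---and the assumption \eqref{eqn:WTimeDerivative} together with $W(\lambda v)=\lambda W(v)$ makes $[\ee\p_t,W(v)\p_x]$ of exactly the same schematic form as the original nonlinearity, so it embeds cleanly into $\E$. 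Spatial derivatives $\p_x$ commute with $L$ at the symbol level but do not commute with $\chi_\w$, which is where the main difficulty will concentrate.

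For each multi-index $\beta$, I would write down the equation solved by $Z^\beta v$, pair it against $\langle D\rangle^{2s'}Z^\beta v$ in $L^2$, and take real parts. The skeleton of the resulting computation is then: the dispersive piece $\frac{i}{\ee}L Z^\beta v$ drops out at leading order because $L=\abs{D}^\al$ is self-adjoint and $iL$ is therefore skew-Hermitian; the transport piece $W(v)\p_x Z^\beta v$ produces, after integration by parts, the term $-\tfrac{1}{2}\int(\p_x W(v))\abs{Z^\beta v}^2$, which is controlled by $\norm{\p_x W(v)}_\infty\cdot\E\lesssim \E$ thanks to \eqref{eqn:W_Estimate2}; the damping piece $\frac{1}{\ee}\chi_\w Z^\beta v$ gives the favorable nonpositive contribution $-\frac{2}{\ee}\int\chi_\w\abs{Z^\beta v}^2$; and the various commutator errors $[Z^\beta,W(v)\p_x]$ are nonlinear but lower-order in derivative count, so they embed into $\E$.

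The main obstacle is the cluster of commutator terms arising from $[Z^\beta,\frac{1}{\ee}\chi_\w]$ and from $[\langle D\rangle^{s'},\chi_\w]$ acting on the damper. These carry a dangerous factor of $\ee^{-1}$, and the precise design of $Z$ and $\E$ is dictated by the need to tame them. For a time-like $Z_i=\ee\p_t$ the commutator vanishes identically and such terms never appear; for a spatial $Z_i=\p_x$ one instead encounters terms of the shape $\frac{1}{\ee}(\p_x\chi_\w) v$, which fortunately are supported in $\supp\chi_\w$. A careful Young-type splitting then distributes these between a piece absorbed by the coercive dissipation $\frac{1}{\ee}\int\chi_\w\abs{\cdot}^2$ and a remainder of size $\E$; the fractional-derivative commutators are handled by standard paradifferential calculus and contribute only lower order.

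The dichotomy $\al=\frac{3}{2}$ versus $\al=\frac{1}{2}$ enters through which vector fields need to appear in $Z$ and at what weight $s'$. In the capillary case $\al=\frac{3}{2}$ the dispersion is principal, so solving the equation for $Lv$ shows that each application of $\ee\p_t$ to $v$ effectively gains $\frac{3}{2}$ spatial derivatives; one can therefore build $\E$ out of $\ee\p_t$-derivatives alone, avoiding the problematic $\p_x$-commutators entirely. In the gravity case $\al=\frac{1}{2}$ the transport $W(v)\p_x$ is principal, so $\p_x$-regularity cannot be recovered from $\ee\p_t$ via the equation and $\p_x$ must be included in the vector-field family; the attendant commutator analysis with $\chi_\w$ then has to be carried out explicitly, and is the main technical source of the case split. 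In both regimes the final assembly produces $\dot\E\lesssim \E$, and integration returns the claimed $\bigo(1/\ee)$ lifespan for $U$.
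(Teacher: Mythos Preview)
Your capillary sketch is essentially the paper's argument: with $Z=\ee\p_t$ and $s'=0$, the commutators with the $\ee^{-1}$-terms vanish and the rest closes exactly as you describe.

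The gravity case, however, has a real gap. Your plan is to include $\p_x$ in the vector-field family and absorb the resulting $\ee^{-1}(\p_x^j\chi_\w)\,\p_x^{k-j}v$ terms into the dissipation via a Young splitting. That absorption requires a pointwise bound of the form $(\p_x^j\chi_\w)^2\lesssim\chi_\w$ for all relevant $j$, which is \emph{not} part of the hypotheses on $\chi_\w$ (the paper only assumes it is smooth and nonnegative); for a generic smooth cutoff these ratios need not be bounded, and in any case the induction over $j$ is delicate. Worse, if you work at $s'>0$ the commutator $\ee^{-1}[\langle D\rangle^{s'},\chi_\w]$ is a genuinely nonlocal operator of order $s'-1$: it is \emph{not} supported in $\supp\chi_\w$, so it cannot be hidden in the localized dissipation, and ``lower order'' does not help because the dangerous factor is $\ee^{-1}$, not a derivative count. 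As written, your estimate would produce $\dot\E\lesssim \ee^{-1}\E$, which is exactly what must be avoided.

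The paper's device is different and sidesteps the issue entirely: in the gravity case it takes $Z=P_L\coloneqq iL+\chi_\w$ itself as the vector field (so $\E=\sum_{k\le 4}\norm{P_L^k v}_{L^2}^2$). Since the $\ee^{-1}$-part of the equation is precisely $\ee^{-1}P_L$, one has $[Z^k,\ee^{-1}P_L]=0$ identically, and the only commutators that appear are $[P_L^k,W_\ee(v)\p_x]$, which carry no $\ee^{-1}$. These are then handled by two dedicated lemmas, $\norm{[P_L^k,\p_x]u}_{H^s}\lesssim\norm{u}_{H^{s+(k-1)\al}}$ and $\norm{[P_L^k,f]u}_{H^s}\lesssim\norm{f}_{H^r}\norm{u}_{H^{s+k\al-1}}$, proved by induction on $k$ after splitting $L$ into a smooth high-frequency piece (to which standard $\Psi$DO calculus applies) and a band-limited low-frequency piece (handled by Bernstein). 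The same $Z=P_L$ choice also gives an alternative proof in the capillary case, so the two regimes can be treated uniformly.
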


\begin{rmk}
\label{EnergyToLifespan}
It is crucial that the energy estimate \eqref{eqn:EnergyEstimateForm} is uniform in $\ee$. With such an estimate in hand, a routine Gr\"{o}nwall argument will yield the desired $\bigo(1)$ lifespan. Of course, we can then deduce that solutions to \eqref{eqn:ClamondToy} persist on an $\bigo(\frac{1}{\ee})$ timescale.
\end{rmk}

\begin{rmk}
\label{Grav-CapRmk}
Of course, our definition of $L$ omits the gravity-capillary case, corresponding to $L \coloneqq \sqrt{\abs{D} + \abs{D}^3}$. Though we do not treat this case here, we believe that similar results would obtain. All of our arguments extend immediately the the gravity-capillary case, except the proof of Lemma \ref{LCommutator}. However, with only fairly minor modifications to the low-frequency analysis, the proof of Lemma \ref{LCommutator} should be able to handle $L = \sqrt{\abs{D} + \abs{D}^3}$ or something more general like $L = \sqrt{\abs{D}^\al + \abs{D}^\be}$. The high-frequency analysis should extend immediately.
\end{rmk}

In Section 3, we prove some commutator estimates which are needed to prove the main energy estimates in Section 4. We additionally include two appendices. Appendix A contains a collection of results which we shall need to frequently employ. On the other hand, Appendix B contains an alternative proof of one of the main results.

\section{Preliminary Commutator Estimates}

In proving the main estimates, we shall encounter a number of commutators involving pseudodifferential operators ($\Psi$DO). Specifically, we will be considering positive integer powers of $P_L = \Op(p_L)$, which is given by
\begin{equation}
\label{eqn:PLDef}
P_L \coloneqq iL + \chiw.
\end{equation}
Recall that $L$ is defined in \eqref{eqn:LDef}. In the sequel, we will primarily be concerned with two types of commutators involving $P_L$. They will be of the form $\comm{P_L^k}{f}$ and $\comm{P_L^k}{\p_x}$ for $k \in \N$. Note that we lightly abuse notation by not distinguishing the notation for a function $f$ and the operator $M_f: u \mapsto fu$. To avoid confusion, note that, when $P$ is an operator and $f$ a function, we define the commutator $\comm{P}{f} \coloneqq \comm{P}{M_f}$; that is,
\begin{equation*}
\comm{P}{f}(u) = P(fu) - fPu.
\end{equation*}

The challenge posed by working with $P_L$ is that its symbol $p_L$ is not smooth at $\xi = 0$, otherwise we could use standard $\Psi$DO commutator estimates. However, as a Fourier multiplier, $iL$ commutes with $\p_x$ and so the commutators of the form $\comm{P_L^k}{\p_x}$ will be rather straightforward to understand. In this case, we have the following result:
\begin{lemma}
\label{PLCommutator1}
Let $k \in \N$. Then, for all $s \geq 0$, we have
\begin{equation}
\label{eqn:PLCommutatorEst1}
\Norm{\comm{P_L^k}{\p_x}(u)}{s} \lesssim \Norm{u}{s+k\al-\al}.
\end{equation}
\end{lemma}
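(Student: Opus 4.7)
The plan is to exploit the fact that, although the symbol $p_L(x,\xi) = i|\xi|^\al + \chiw(x)$ fails to be smooth at $\xi=0$, the singular part $iL$ is a Fourier multiplier and therefore commutes \emph{exactly} with $\p_x$. Consequently the entire commutator $\comm{P_L^k}{\p_x}$ is generated by the benign commutator with the smooth cutoff $\chiw$, which is a bounded multiplication operator of order zero. This is exactly what explains the statement: one might naively expect a loss of $k\al$ derivatives from a $k$-fold composition of order-$\al$ operators commuted with $\p_x$, but the derivative-losing part cancels and only a $(k-1)\al$-order remainder survives.

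I would first dispatch the base case $k=1$: since $iL$ is a Fourier multiplier, $\comm{iL}{\p_x}=0$, and a direct computation yields
\[
\comm{P_L}{\p_x} = \comm{\chiw}{\p_x} = -\chiw^{\prime},
\]
which, being multiplication by a smooth periodic function, is bounded $H^s \to H^s$ for every $s \in \R$. This proves \eqref{eqn:PLCommutatorEst1} when $k=1$.

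For $k \geq 2$ I would apply the standard derivation identity
\[
\comm{P_L^k}{\p_x} = \sum_{j=0}^{k-1} P_L^{\,j}\,\comm{P_L}{\p_x}\,P_L^{\,k-1-j} = -\sum_{j=0}^{k-1} P_L^{\,j}\, \chiw^{\prime}\, P_L^{\,k-1-j},
\]
and then control each summand by mapping-property bookkeeping. Since $|D|^\al$ maps $H^{t+\al}$ to $H^t$ as a Fourier multiplier (on $\T$ the zero mode is harmless because $|0|^\al = 0$) and multiplication by $\chiw\in C^\infty(\T)$ preserves every $H^t$, an elementary induction gives $P_L^{\,m}:H^{t+m\al}\to H^t$ continuously for any $m\in\N$. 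Combining this with the fact that multiplication by $\chiw^\prime \in C^\infty(\T)$ also preserves every $H^t$, each summand obeys
\[
\Norm{P_L^{\,j}\,\chiw^\prime\,P_L^{\,k-1-j}u}{s}
\lesssim \Norm{\chiw^\prime P_L^{\,k-1-j} u}{s+j\al}
\lesssim \Norm{P_L^{\,k-1-j}u}{s+j\al}
\lesssim \Norm{u}{s+(k-1)\al},
\]
and summing the $k$ terms yields \eqref{eqn:PLCommutatorEst1}.

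The only mild subtlety is verifying that iterates $P_L^{\,m}$ genuinely act with the expected order $m\al$ despite the nonsmoothness of the symbol at $\xi=0$; on $\T$ this is not actually an obstacle, since one simply expands $(iL + \chiw)^m$ into a sum of compositions of Fourier multipliers $|D|^\al$ with multiplications by powers of the smooth function $\chiw$, each of which is manifestly bounded between the appropriate Sobolev spaces. The proof thus reduces to combinatorial bookkeeping once the key observation $\comm{iL}{\p_x}=0$ is in hand.
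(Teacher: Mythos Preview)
Your proof is correct and follows essentially the same approach as the paper: both exploit $\comm{iL}{\p_x}=0$ to reduce the base case to multiplication by $\p_x\chiw$, and both then use the mapping property $P_L^m:H^{t+m\al}\to H^t$ to handle higher $k$. The only cosmetic difference is that the paper proceeds by induction via the two-term identity $\comm{P_L^{k+1}}{\p_x}=P_L\comm{P_L^k}{\p_x}+\comm{P_L}{\p_x}P_L^k$, whereas you write out the full Leibniz expansion $\sum_{j=0}^{k-1}P_L^{\,j}\comm{P_L}{\p_x}P_L^{\,k-1-j}$ and estimate each summand directly; these are equivalent organizations of the same argument.
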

\begin{proof}
We shall proceed by induction on $k$. For $k=1$, it is straightforward to verify \eqref{eqn:PLCommutatorEst1}. Namely, by Lemma \ref{SobolevMultiplication}, we have
\begin{equation}
\label{eqn:PLEst0}
\Norm{\comm{P_L}{\p_x}(u)}{s} = \Norm{(\p_x\chiw)u}{s} \lesssim \Norm{u}{s} \ \ \forall s \geq 0.
\end{equation}

Now, assume that, for some fixed $k \in \N$, we have
\begin{equation}
\label{eqn:PLInduction}
\Norm{\comm{P_L^k}{\p_x}(u)}{s} \lesssim \Norm{u}{s+k\al - \al} \ \ \forall s \geq 0.
\end{equation}
Observe that we can write
\begin{equation}
\label{eqn:PLCommutatorSum}
\comm{P_L^{k+1}}{Q}(u) = P_L\comm{P_L^k}{Q}(u) + \comm{P_L}{Q}(P_L^ku),
\end{equation}
where $Q$ is an arbitrary operator. Fixing $s \geq 0$, we can now estimate each term on the right-hand side of \eqref{eqn:PLCommutatorSum} in the $H^s$ norm. We apply the triangle inequality, Lemma \ref{SobolevMultiplication} and the induction hypothesis \eqref{eqn:PLInduction} to the first term:
\begin{align}
\Norm{P_L\comm{P_L^k}{\p_x}(u)}{s} &\leq \Norm{L\comm{P_L^k}{\p_x}(u)}{s} + \Norm{\chiw\comm{P_L^k}{\p_x}(u)}{s} \nonumber\\
&\lesssim \Norm{\comm{P_L^k}{\p_x}(u)}{s+\al} + \Norm{\comm{P_L^k}{\p_x}(u)}{s} \nonumber\\
&\lesssim \Norm{u}{s+k\al}. \label{eqn:PLEst1.1}
\end{align}
The second term on the right-hand side of \eqref{eqn:PLCommutatorSum} is straightforward to estimate in $H^s$ via Lemma \ref{SobolevMultiplication}:
\begin{equation}
\label{eqn:PLEst1.2}
\Norm{\comm{P_L}{\p_x}(P_L^ku)}{s} = \Norm{(\p_x\chiw)P_L^ku}{s} \lesssim \Norm{P_L^ku}{s} \lesssim \Norm{u}{s+k\al}.
\end{equation}
Using equation \eqref{eqn:PLCommutatorSum} and the triangle inequality, we can deduce from \eqref{eqn:PLEst1.1} and \eqref{eqn:PLEst1.2} that
\begin{equation}
\label{eqn:PLEst1}
\Norm{\comm{P_L^{k+1}}{\p_x}(u)}{s} \lesssim \Norm{u}{s+k\al}.
\end{equation}
This completes the proof.
\end{proof}

The commutators of the form $\comm{P_L^k}{f}$ will require a bit more work. Our objective is to prove the following:

\begin{lemma}
\label{PLCommutator2}
Let $k \in \N$. Then, we have
\begin{equation}
\label{eqn:PLCommutatorEst2}
\Norm{\comm{P_L^k}{f}(u)}{s} \lesssim \Norm{f}{r}\Norm{u}{s+k\al - 1},
\end{equation}
provided $r > \frac{3}{2}$, $s \geq 0$ and $s + k\al \leq r$.
\end{lemma}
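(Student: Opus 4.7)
The plan is to proceed by induction on $k$, exactly mirroring the structure of the proof of Lemma \ref{PLCommutator1}. The base case $k=1$ is the crux. Since $\chiw$ and $f$ are both multiplication operators, $\comm{\chiw}{f} = 0$, so
\begin{equation*}
\comm{P_L}{f} = i\comm{L}{f} = i\comm{\abs{D}^\al}{f},
\end{equation*}
and the base case reduces to establishing the purely non-local commutator bound
\begin{equation*}
\Norm{\comm{\abs{D}^\al}{f}u}{s} \lesssim \Norm{f}{r}\Norm{u}{s+\al-1}
\end{equation*}
under $r > \sfrac{3}{2}$, $s \geq 0$, $s+\al \leq r$.

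To handle the non-smoothness of the symbol $\abs{\xi}^\al$ at the origin I would decompose $\abs{D}^\al = \phi(D)\abs{D}^\al + (1-\phi(D))\abs{D}^\al$ using a radial low-frequency cutoff $\phi$. The low-frequency piece $\phi(D)\abs{D}^\al$ is a bounded Fourier multiplier, so $\comm{\phi(D)\abs{D}^\al}{f}u$ is trivially controlled using the Sobolev multiplication lemma and the inclusion $H^r \hookrightarrow L^\infty$ (since $r > \sfrac{3}{2}$). For the high-frequency piece $(1-\phi(D))\abs{D}^\al$ the symbol is smooth and homogeneous of degree $\al$ at infinity, so standard paradifferential commutator calculus applies. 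Specifically, apply Bony's paraproduct decomposition $fu = T_f u + T_u f + R(f,u)$ to write
\begin{equation*}
\comm{\abs{D}^\al}{f}u = \comm{\abs{D}^\al}{T_f}u + \bigl(\abs{D}^\al(T_u f) - T_{\abs{D}^\al u}f\bigr) + \bigl(\abs{D}^\al R(f,u) - R(f,\abs{D}^\al u)\bigr),
\end{equation*}
modulo low-frequency remainders. The first term is a paradifferential operator of order $\al-1$ whose norm is dominated by $\pnorm{\nabla f}{\infty} \lesssim \Norm{f}{r}$ (using $r > \sfrac{3}{2}$). The remaining smoothing terms involve $f$ at its top regularity and gain one derivative relative to $\abs{D}^\al$, and the condition $s + \al \leq r$ is exactly what is needed to absorb them into $\Norm{f}{r}\Norm{u}{s+\al-1}$.

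For the inductive step, assume \eqref{eqn:PLCommutatorEst2} holds at level $k$ for all admissible $s$. Using the splitting identity \eqref{eqn:PLCommutatorSum} with $Q = f$,
\begin{equation*}
\comm{P_L^{k+1}}{f}(u) = P_L\comm{P_L^k}{f}(u) + \comm{P_L}{f}(P_L^k u),
\end{equation*}
the first term is estimated by combining the boundedness $L:H^{s+\al}\to H^s$, $\chiw: H^s \to H^s$ (Lemma \ref{SobolevMultiplication}), and the inductive hypothesis at level $s+\al$, which is permissible since $(s+\al)+k\al = s+(k+1)\al \leq r$. The second term uses the base case together with the iterated bound $\Norm{P_L^k u}{s+\al-1} \lesssim \Norm{u}{s+(k+1)\al-1}$. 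Both bounds combine to give the claim at level $k+1$.

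The main obstacle is the base case and, more specifically, the non-smoothness of $p_L$ at $\xi = 0$: this is precisely what the paper flags as preventing a direct application of off-the-shelf $\Psi$DO commutator estimates. Once the low-frequency portion is split off and handled by elementary multiplier estimates, the high-frequency commutator estimate is a routine paradifferential computation, but verifying that the combined bound conforms exactly to the form $\Norm{f}{r}\Norm{u}{s+\al-1}$ under the sharp hypothesis $s + \al \leq r$ (rather than a stricter condition requiring $u$ in $L^\infty$ or a fractional-derivative loss on $f$) is where the technical care is needed.
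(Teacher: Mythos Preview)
Your proposal is correct and follows essentially the same route as the paper: induction on $k$ via the splitting identity \eqref{eqn:PLCommutatorSum}, with the base case $\comm{P_L}{f} = i\comm{L}{f}$ handled by a low/high frequency decomposition of $L$ (the paper's Lemma~\ref{LCommutator}). The only difference is that for the high-frequency piece the paper invokes Taylor's commutator estimate for $\mathcal{B}S_{1,1}^m$ symbols (Lemma~\ref{PsiDOCommutator}) as a black box, whereas you sketch a direct Bony paraproduct computation---these amount to the same thing, since the cited result is itself proved by exactly that paraproduct analysis.
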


Notice that if we could prove \eqref{eqn:PLCommutatorEst2} for $k=1$, then proceeding by induction on $k$ and exploiting \eqref{eqn:PLCommutatorSum} would give the result much like in the proof of Lemma \ref{PLCommutator1}. However, $L$ will not commute with multiplication by a function as it did with differentiation. In fact, we have
\begin{equation*}
\comm{P_L}{f}(u) = i\comm{L}{f}(u).
\end{equation*}
Therefore, in order to get such an argument off the ground, we will need to understand how to estimate such a commutator.

As noted above, a key detail here is that $\ell$ (the symbol of $L$) is not smooth at $\xi = 0$ and so classical $\Psi$DO (or Fourier multiplier) commutator estimates will not apply directly. However, notice that if $\phi = \phi(\xi) \in C_c^\infty(T^*\T)$ with $\phi \equiv 0$ in some neighborhood of $\xi = 0$, then $\phi\ell \in S_{1,0}^{\al}$, where $S_{\rho,\delta}^m$ denotes the standard H\"{o}rmander class of symbols of order $m$ and type $\rho,\delta$. Thus, classical $\Psi$DO commutator estimates ``almost'' apply to $L$ and in fact do apply to $L$ as long as we filter out the low frequencies. Given that we should be able to handle the low frequencies with Bernstein-type inequalities for band-limited functions, this indicates that we should be able to adapt classical $\Psi$DO commutator estimates to handle $L$ and doing so is, in fact, our next objective.

There are a great many results on estimating commutators of the form $\comm{\Op(p)}{f}(u)$ in Sobolev spaces, where $p \in S_{\rho,\delta}^m$ or some other appropriate symbol class. The book \cite{T2} is an excellent resource for microlocal analysis in the context of nonlinear partial differential equations, including such commutator estimates. The result which we will use as a basis for building the needed commutator estimate is the following:

\begin{lemma}
\label{PsiDOCommutator}
Let $m \geq 0$. Further, take $r,s \in \R$ such that $r > \frac{3}{2}$, $s \geq 0$ and $s + m \leq r$. Then, for $p = p(x,\xi) \in \mathcal{B}S_{1,1}^m$, we have
\begin{equation}
\label{eqn:PsiDOCommutatorEst}
\Norm{\comm{\Op(p)}{f}(u)}{s} \lesssim \Norm{f}{r}\Norm{u}{s+m-1}.
\end{equation}
In other words, $\comm{\Op(p)}{f}$ is an operator of order $m-1$.
\end{lemma}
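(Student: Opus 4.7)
The plan is to reduce to standard paradifferential calculus via Bony's decomposition $gh = T_g h + T_h g + R(g,h)$. Applying this both to the product $fu$ and to $f \cdot \Op(p) u$, the commutator splits as
\begin{equation*}
\comm{\Op(p)}{f} u = \comm{\Op(p)}{T_f} u + \paren{\Op(p) T_u f - T_{\Op(p) u} f} + \paren{\Op(p) R(f, u) - R(f, \Op(p) u)}.
\end{equation*}
The main piece is the first one; the other two are genuinely lower-order remainders, handled via the smoothing properties of $T_u(\cdot)$ and $R(\cdot, \cdot)$.

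For the main piece, I would appeal to symbolic calculus. Since $r > \frac{3}{2} > \frac{1}{2}$, Sobolev embedding on $\T$ gives $f \in \Lp{\infty}$ and $\p_x f \in \Lp{\infty}$, so the symbol $\sigma_{T_f}(x,\xi)$ of the paraproduct lies in $S^0_{1,1}$ with seminorms controlled by $\pnorm{f}{\infty}$, while $\p_x \sigma_{T_f}$ has seminorms controlled by $\pnorm{\p_x f}{\infty} \lesssim \Norm{f}{r}$. The composition calculus for $\mathcal{B}S^m_{1,1}$ paired with $S^0_{1,1}$ then yields that $\Op(p) \circ T_f - T_f \circ \Op(p)$ has symbol of order $m-1$, with principal part proportional to $(\p_\xi p)(\p_x \sigma_{T_f})$. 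A standard Bourdaud--Meyer-type continuity theorem for the resulting symbol class then furnishes the required $H^{s+m-1} \to H^s$ bound for $s \geq 0$.

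For the two remaining pieces, I would use the classical Bony continuity estimates for paraproducts and remainders. The condition $s + m \leq r$ together with $r > \frac{3}{2}$ ensures that $T_u f$ and $R(f, u)$ both lie in $H^{s+m}$ with norm controlled by $\Norm{f}{r}\Norm{u}{s+m-1}$, so applying $\Op(p) : H^{s+m} \to H^s$ handles $\Op(p) T_u f$ and $\Op(p) R(f,u)$. The conjugate terms $T_{\Op(p) u} f$ and $R(f, \Op(p) u)$ are estimated similarly by noting that $\Op(p) u \in H^{s-1}$ while $f \in H^r$ has enough regularity to spare to pay the paraproduct/remainder cost. The main obstacle I anticipate is bookkeeping the one-derivative gain carefully: without the Lipschitz bound $\p_x f \in \Lp{\infty}$ (exactly what $r > \frac{3}{2}$ provides), the cancellation in the symbolic calculus that produces order $m-1$ rather than merely $m$ would fail, and the sharpness of the indices $r > \frac{3}{2}$, $s + m \leq r$ in the statement reflects precisely this constraint.
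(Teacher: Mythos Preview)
The paper does not actually prove this lemma: its entire proof reads ``See Proposition 4.2 in \cite{T4}.'' In other words, the author treats this as a known black-box result from Taylor's commutator-estimate paper and simply cites it. Your proposal therefore goes well beyond what the paper does, supplying an outline of an argument where the paper supplies none.

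Your route via Bony's decomposition is a standard and correct way to establish such estimates, and is in the same spirit as the proofs in the literature the paper points to. The bookkeeping you describe checks out: the constraint $r>\tfrac{3}{2}$ is exactly what gives $f,\p_x f\in L^\infty$ for the symbolic-calculus gain on $\comm{\Op(p)}{T_f}$, and it is also what makes the paraproduct/remainder pieces land in $H^{s+m}$ (resp.\ $H^s$) via $\Norm{T_u f}{r+(s+m-1)-1/2}$ and $\Norm{R(f,u)}{r+(s+m-1)-1/2}$ type bounds, together with $s+m\leq r$. The one place to be careful is the composition calculus step: $\mathcal{B}S_{1,1}^m$ is not closed under composition in the naive way $S_{1,0}^m$ is, so you should invoke the specific Bourdaud/Meyer-type theorems for this class (or, equivalently, work through the dyadic decomposition directly) rather than asserting a generic $S_{1,1}$ symbolic calculus. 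With that caveat your sketch is sound; it simply provides substance where the paper is content to cite.
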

\begin{proof}
See Proposition 4.2 in \cite{T4}.
\end{proof}

The symbol class $\mathcal{B}S_{1,1}^m$ contains those $p(x,\xi) \in S_{1,1}^m$ such that
\begin{equation}
\label{eqn:SpectralCondition}
\exists \y \in (0,1) : \ \supp \hat{p}(\tta,\xi) \subset \set{(\tta,\xi) : \abs{\tta} \leq \y\abs{\xi}}.
\end{equation}
What is important for us about this symbol class is that
\begin{equation}
\label{eqn:SymbolInclusion}
S_{1,0}^m \subset \mathcal{B}S_{1,1}^m + S_{1,0}^{-\infty};
\end{equation}
in fact, we have, for any $0 \leq \de < 1$, $S_{1,\de}^m \subset \mathcal{B}S_{1,1}^m + S_{1,0}^{-\infty}$. In addition, $\mathrm{Op}\mathcal{B}S_{1,1}^m$ contains the paradifferential operators of \cite{Bon1}. A similar estimate that would have suited our purposes is Lemma 3.4 in \cite{Ala6}. We could have also used the estimate (3.6.1) from \cite{T2}, which for $p \in S_{1,0}^m$ would give the same estimate as \eqref{eqn:PsiDOCommutatorEst} after applying Sobolev embedding.

We are now going to use Lemma \ref{PsiDOCommutator} to prove the needed estimate for commutators involving $L$.

\begin{lemma}
\label{LCommutator}
Let $L$ be as in \eqref{eqn:LDef}, $r > \frac{3}{2}$, $s \geq 0$ and $s + \al \leq r$. Then, the result of Lemma \ref{PsiDOCommutator} holds with $p = \ell$. Namely, it holds that
\begin{equation}
\label{eqn:LCommutatorEst}
\Norm{\comm{L}{f}(u)}{s} \lesssim \Norm{f}{r}\Norm{u}{s+\al-1}.
\end{equation}
\end{lemma}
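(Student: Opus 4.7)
The plan is to reduce the result to a direct application of Lemma \ref{PsiDOCommutator} applied to a smoothed-out version of $\ell$. The only obstruction to invoking Lemma \ref{PsiDOCommutator} with $p = \ell$ is the failure of $\ell(\xi) = \abs{\xi}^\al$ to be smooth at $\xi = 0$. On the torus, however, this singularity is essentially costless: the Fourier multiplier $L$ is determined by the values of its symbol at the integer frequencies alone, so we may freely modify $\ell$ on any open interval $(-1,1)$ as long as we preserve $\ell(0) = 0$.

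Concretely, I would fix $\chi \in C_c^\infty(\R)$ with $\chi \equiv 1$ in a neighborhood of $0$ and $\supp(\chi) \subset (-1, 1)$, and set
\begin{equation*}
\tilde{\ell}(\xi) \coloneqq (1 - \chi(\xi))\,\abs{\xi}^\al.
\end{equation*}
Because $\chi$ annihilates $\ell$ exactly where $\ell$ fails to be smooth, $\tilde{\ell}$ is smooth on all of $\R$, and a routine computation gives $\abs{\p_\xi^k \tilde{\ell}(\xi)} \lesssim (1 + \abs{\xi})^{\al - k}$ for every $k \geq 0$, so $\tilde{\ell} \in S_{1,0}^\al$. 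For every $\xi \in \ZZ$, either $\xi = 0$ (in which case $\tilde{\ell}(0) = 0 = \ell(0)$) or $\xi \neq 0$ and $\chi(\xi) = 0$ (in which case $\tilde{\ell}(\xi) = \ell(\xi)$). Hence $L = \Op(\tilde{\ell})$ as operators on $\T$.

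Next I would verify that $\tilde{\ell} \in \mathcal{B}S_{1,1}^\al$. Since $\tilde{\ell}$ is independent of $x$, its Fourier transform in the $x$ variable is supported in the hyperplane $\set{\tta = 0}$, so the spectral condition \eqref{eqn:SpectralCondition} is trivially satisfied for any $\y \in (0,1)$; the differential bounds required for membership in $\mathcal{B}S_{1,1}^\al$ coincide, in this Fourier-multiplier setting, with those already verified for $S_{1,0}^\al$. Applying Lemma \ref{PsiDOCommutator} to $p = \tilde{\ell}$ with $m = \al \geq 0$ then yields
\begin{equation*}
\Norm{\comm{L}{f}(u)}{s} = \Norm{\comm{\Op(\tilde{\ell})}{f}(u)}{s} \lesssim \Norm{f}{r}\Norm{u}{s+\al-1}
\end{equation*}
under exactly the hypotheses $r > \frac{3}{2}$, $s \geq 0$, and $s + \al \leq r$.

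No step in this plan is really hard; the single nontrivial observation is the torus-specific fact that a cutoff supported in $(-1,1)$ cannot touch any nonzero integer frequency, which is what makes the smoothing procedure free of charge. Without this observation one would instead have to treat $\Op(\chi\ell)$ as a genuine low-frequency remainder and bound its commutator with $f$ by hand (e.g., via Bernstein-type inequalities for band-limited functions). This is precisely the bookkeeping alluded to in Remark \ref{Grav-CapRmk} that would be needed to accommodate symbols such as $\sqrt{\abs{\xi}+\abs{\xi}^3}$, whose low-frequency behavior is also nontrivial.
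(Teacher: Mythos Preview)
Your argument is correct and genuinely different from the paper's. The paper does not exploit the discreteness of the dual lattice $\ZZ$ of $\T$; instead it writes $L = S_0L + (\id - S_0)L$, bounds the low-frequency commutator $\comm{S_0L}{f}$ crudely via Bernstein (Lemma~\ref{Bernstein}) and Lemma~\ref{SobolevMultiplication}, and only then appeals to Lemma~\ref{PsiDOCommutator} for the high-frequency piece, after invoking the inclusion \eqref{eqn:SymbolInclusion} to pass from $S_{1,0}^\al$ to $\mathcal{B}S_{1,1}^\al + S_{1,0}^{-\infty}$ and handling the smoothing remainder separately. Your route is shorter: since a Fourier multiplier on $\T$ is determined by its symbol on $\ZZ$, cutting off in $(-1,1)$ changes nothing at the operator level, and an $x$-independent symbol lies directly in $\mathcal{B}S_{1,1}^\al$ (no $S_{1,0}^{-\infty}$ correction needed), so a single application of Lemma~\ref{PsiDOCommutator} finishes. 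What the paper's longer argument buys is portability: it works verbatim on $\R$, and it isolates the low-frequency bookkeeping that Remark~\ref{Grav-CapRmk} says must be revisited for symbols like $\sqrt{\abs{\xi}+\abs{\xi}^3}$---though, as you implicitly note, on $\T$ your trick would dispatch that case too.
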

\begin{proof}
As we noted previously, the challenge we must overcome is that $\ell$ is not smooth at $\xi = 0$. To deal with this fact, we decompose $L$ into a component with smooth symbol and a low-frequency component. We will be able to control the (non-smooth) low-frequency factor via Bernstein-type inequalities, while Lemma \ref{PsiDOCommutator} will give us control of the high-frequency, but smooth, factor. 

We will begin by constructing a low-frequency filtering operator of Littlewood-Paley theory (such operators are also called partial sum operators or low-frequency cut-off operators). Let $\phi \in C_c^\infty(T^*\T)$, $0 \leq \phi \leq 1$, be a radial function such that
\begin{equation*}
\phi(\xi) = 1 \text{ for } \abs{\xi} \leq \frac{1}{2} \text{ and } \phi(\xi) = 0 \text{ for } \abs{\xi} \geq 1.
\end{equation*}
We then define $S_0:\Dist \to \Dist$ by
\begin{equation*}
S_0u \coloneqq \phi(D)u,
\end{equation*}
noticing that
\begin{equation*}
\supp \FT(S_0 u) \subset \set{\abs{\xi} \leq 1} \text{ for } u \in \Dist.
\end{equation*}
For more on such operators, see \cite{Met1}.

We can now define the aforementioned decomposition: we write $L = L_1 + L_{>1}$, where $L_1 \coloneqq S_0L$ and $L_{>1} \coloneqq (\id - S_0)L$. Then, simply applying the triangle inequality gives
\begin{equation}
\label{eqn:LCommutatorEst0}
\Norm{\comm{L}{f}(u)}{s} \leq \Norm{\comm{L_1}{f}(u)}{s} + \Norm{\comm{L_{>1}}{f}(u)}{s}.
\end{equation}
We now just have to estimate each term on the right-hand side of \eqref{eqn:LCommutatorEst0}.

We will begin with the low-frequency component. As Fourier multipliers, $S_0$ and $L$ commute, so we can use Lemma \ref{SobolevMultiplication} and a Bernstein-type inequality (e.g., Lemma \ref{Bernstein}) to obtain
\begin{align}
\Norm{L_1(fu)}{s} &= \Norm{LS_0(fu)}{s} \lesssim 2^{\al}\Norm{f}{r}\Norm{u}{s}, \label{eqn:LCommutatorEst1.1}\\
\Norm{fL_1u}{s} &\lesssim \Norm{f}{r}\Norm{LS_0u}{s} \lesssim 2^{\al}\Norm{f}{r}\Norm{u}{s}. \label{eqn:LCommutatorEst1.2}
\end{align}
Of course, \eqref{eqn:LCommutatorEst1.1} and \eqref{eqn:LCommutatorEst1.2} imply that
\begin{equation}
\label{eqn:LCommutatorEst1}
\Norm{\comm{L_1}{f}(u)}{s} \lesssim_\al \Norm{f}{r}\Norm{u}{s}.
\end{equation}

On the other hand, we know from \eqref{eqn:SymbolInclusion} that we can write $L_{>1} = \mathcal{B}L_{>1} + R$, where $\mathcal{B}L_{>1} \in \Op\mathcal{B}S_{1,1}^m$ and $R \in \Op S_{1,0}^{-\infty}$. We can then apply Lemma \ref{PsiDOCommutator} to deduce that
\begin{equation}
\label{eqn:LCommutatorEst2.1}
\Norm{\comm{\mathcal{B}L_{>1}}{f}(u)}{s} \lesssim \Norm{f}{r}\Norm{u}{s+\al-1}.
\end{equation}
On the other hand, since $R$ is a smoothing operator, we have
\begin{equation}
\label{eqn:LCommutatorEst2.2}
\Norm{\comm{R}{f}(u)}{s} \leq \Norm{R(fu)}{s} + \Norm{fRu}{s} \lesssim \Norm{fu}{s} + \Norm{f}{r}\Norm{Ru}{s} \lesssim \Norm{f}{r}\Norm{u}{s}.
\end{equation}
Combining the estimates \eqref{eqn:LCommutatorEst0}, \eqref{eqn:LCommutatorEst1}, \eqref{eqn:LCommutatorEst2.1} and \eqref{eqn:LCommutatorEst2.2} yields \eqref{eqn:LCommutatorEst}.

\end{proof}

We now have all of the tools needed to prove the second main commutator estimate:

\begin{proof}[Proof of Lemma \ref{PLCommutator2}]
To begin, observe that, by Lemma \ref{LCommutator}, we have
\begin{equation}
\label{eqn:PLCommutator2BaseCase}
\Norm{\comm{P_L}{f}(u)}{s} = \Norm{\comm{L}{f}(u)}{s} \lesssim \Norm{f}{r}\Norm{u}{s+\al-1} \ \forall s \geq 0.
\end{equation}
Now, assume that, for some fixed $k \in \N$, we have
\begin{equation}
\label{eqn:PLCommutator2InductionHyp}
\Norm{\comm{P_L^k}{f}(u)}{s} \lesssim \Norm{f}{r}\Norm{u}{s+k\al-1} \ \forall s \geq 0.
\end{equation}
Fixing $s \geq 0$, we can again make use of equation \eqref{eqn:PLCommutatorSum}, which gives
\begin{equation}
\label{eqn:PLCommutator2Est0}
\Norm{\comm{P_L^{k+1}}{f}(u)}{s} \leq \Norm{P_L\comm{P_L^k}{f}(u)}{s} + \Norm{\comm{P_L}{f}(P_L^ku)}{s}.
\end{equation}
Then, Lemma \ref{SobolevMultiplication}, \eqref{eqn:PLCommutator2BaseCase} and \eqref{eqn:PLCommutator2InductionHyp} yield
\begin{align}
\Norm{P_L\comm{P_L^k}{f}(u)}{s} &\lesssim \Norm{\comm{P_L^k}{f}(u)}{s+\al} + \Norm{\comm{P_L^k}{f}(u)}{s} \lesssim \Norm{f}{r}\Norm{u}{s+k\al+\al-1}, \label{eqn:PLCommutator2Est1}\\
\Norm{\comm{P_L}{f}(P_L^ku)}{s} &\lesssim \Norm{f}{r}\Norm{P_L^ku}{s+\al-1} \lesssim \Norm{f}{r}\Norm{u}{s+k\al+\al-1}. \label{eqn:PLCommutator2Est2}.
\end{align}
The desired claim then follows by induction.
\end{proof}

Having the above commutator estimates in hand, we are now prepared to prove the desired energy estimates pursuant to the approach outlined in Section 2.

\section{The Main Energy Estimates}

Here our objective is to prove the energy estimates of Theorem \ref{MainTheorem}. This will, of course, require first defining an appropriate energy for solutions to \eqref{eqn:ClamondToyScale}. However, as we noted previously, the value of $\al$ plays an important role in defining a suitable energy. Nevertheless, there are some relevant results which we can prove for $L$ with any value of $\al \in (0,2]$. In the linear case ($W \equiv 0$), one can show that the solution $v$ actually decays (in norm). On the other hand, in the nonlinear case ($W \not\equiv 0$), we show that $v$ satisfies
\begin{equation*}
\frac{d}{dt}\pnorm{v(t)}{2}^2 \lesssim \pnorm{v(t)}{2}^2.
\end{equation*}

After proving the above results, we break our analysis into two cases: $\al = \frac{3}{2}$ and $\al = \frac{1}{2}$. We first consider $\al = \frac{3}{2}$, defining an appropriate energy $\Ec$ and then proving the desired estimate. Finally, we do the same for $\al = \frac{1}{2}$.

\subsection{Linear Damping}

Here we show that, when $W \equiv 0$, the external pressure $\pex$ damps the energy.
In particular, we show that, for any $k \in \N_0$, the $H^{k\al}$ norm of $v$ is decreasing in time and thus is bounded above by $\Norm{v_0}{k\al}$, provided $\sigma \geq k\al$. Recall that $v_0 \in H^\sigma$ by hypothesis. We begin with the following result:
\begin{lemma}
\label{ClamondToyL2Dec}
If $v$ solves \eqref{eqn:ClamondToyScale} with $W \equiv 0$, then $\norm{v(t)}_\Lp{2}$ is decreasing in time and, in particular,
\begin{equation}
\label{eqn:ClamondToyL2Est}
\norm{v}_{L_t^\infty L_x^2} \leq \norm{v_0}_\Lp{2}.
\end{equation}
This claim is valid for any $\al \in (0,2]$.
\end{lemma}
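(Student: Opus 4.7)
The plan is to run a standard $L^2$ energy estimate directly on the equation \eqref{eqn:ClamondToyScale} with $W \equiv 0$. Specifically, I will compute $\frac{d}{dt}\pnorm{v(t)}{2}^2 = 2\Rea\inner{\p_t v}{v}_{L^2}$ and substitute $\p_t v = -\frac{i}{\ee}Lv - \frac{1}{\ee}\chi_\w v$ from the PDE.

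The key observation is that the two terms contribute in qualitatively different ways. Since $L = \abs{D}^\al$ is a Fourier multiplier with real, even symbol $\abs{\xi}^\al$, it is self-adjoint on $L^2$, so $\inner{Lv}{v}_{L^2} \in \R$ and hence $-\frac{i}{\ee}\inner{Lv}{v}_{L^2}$ is purely imaginary. Its real part vanishes. On the other hand, since $\chi_\w \geq 0$ is real-valued,
\begin{equation*}
\Rea\inner{\tfrac{1}{\ee}\chi_\w v}{v}_{L^2} = \tfrac{1}{\ee}\int_\T \chi_\w \abs{v}^2 \, dx \geq 0.
\end{equation*}
Putting these two observations together yields the pointwise-in-time identity
\begin{equation*}
\tfrac{d}{dt}\pnorm{v(t)}{2}^2 = -\tfrac{2}{\ee}\int_\T \chi_\w(x) \abs{v(x,t)}^2 \, dx \leq 0,
\end{equation*}
which shows $\pnorm{v(t)}{2}$ is nonincreasing and immediately gives \eqref{eqn:ClamondToyL2Est}.

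There is essentially no obstacle here; the argument is the standard dissipative energy identity, and the only thing to note is that the form of $L$ guarantees self-adjointness in $L^2$ so that the conservative (Hamiltonian) part drops out of the real part. The sign of $\chi_\w$ then makes the damping term manifestly dissipative, and the fact that the estimate holds for any $\al \in (0,2]$ comes for free since no property of $L$ beyond self-adjointness is used.
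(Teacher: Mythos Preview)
Your proof is correct and follows essentially the same approach as the paper: differentiate $\pnorm{v(t)}{2}^2$, substitute from the equation, observe that the $L$-contribution is purely imaginary (the paper phrases this as the left-hand side being real so the imaginary part of the right-hand side must vanish, while you invoke self-adjointness of $L$ directly), and conclude that only the nonpositive $\chi_\w$-term survives. The resulting identity $\frac{d}{dt}\pnorm{v(t)}{2}^2 = -\frac{2}{\ee}\int \chi_\w \abs{v}^2\,dx \leq 0$ is identical in both arguments.
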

\begin{proof}
Differentiating $\norm{v(t)}_\Lp{2}^2$ with respect to $t$ and passing the derivative through the integral gives
\begin{equation}
\label{eqn:L2Derivative}
\frac{d}{dt}\norm{v(t)}_\Lp{2}^2 = \Int \vbar \p_t v + v\p_t\vbar \ dx.
\end{equation}
Noting that $\vbar$ also solves \eqref{eqn:ClamondToyScale}, substituting from \eqref{eqn:ClamondToyScale} into \eqref{eqn:L2Derivative} and doing some simple computations yields
\begin{align*}
\frac{d}{dt}\norm{v(t)}_\Lp{2}^2 &= -\Int \vbar\left( \frac{i}{\ee}Lv + \frac{1}{\ee}\chi_{\w} v \right) \ dx - \Int v\left( \frac{i}{\ee}L\vbar + \frac{1}{\ee}\chi_{\w}\vbar \right) \ dx\\
&= -\frac{1}{\ee}\Int \vbar\chi_{\w} v + v\chi_{\w}\vbar \ dx - \frac{i}{\ee}\Int \vbar L v + vL\vbar \ dx 
\end{align*}

We can then slightly rewrite the second integral on the right-hand side above to see that it is purely imaginary:
\begin{equation}
\label{eqn:ClamondToyL2Dec1}
\frac{d}{dt}\norm{v(t)}_\Lp{2}^2 = -\frac{2}{\ee}\left( \Int \chi_{\w}\abs{v}^2 \ dx + i \Int \abs{\sqrt{L}v}^2 \ dx \right).
\end{equation}
But the left-hand side of \eqref{eqn:ClamondToyL2Dec1} is real-valued, so the imaginary part of the right-hand side must vanish. We therefore have
\begin{equation}
\label{eqn:Toy1L2DecEst}
\frac{d}{dt}\norm{v(t)}_\Lp{2}^2 = -\frac{2}{\ee}\Int \chi_{\w}\abs{v}^2 \ dx \leq 0.
\end{equation}
Hence, $\norm{v(t)}_\Lp{2}$ is decreasing in $t$, from which \eqref{eqn:ClamondToyL2Est} immediately follows.
\end{proof}

\begin{lemma}
\label{LinearDamping}
Let $k \in \N_0$ be arbitrary. If $v$ solves \eqref{eqn:ClamondToyScale}, where $W \equiv 0$ and $\sigma \geq k\al$, then $\Norm{v(t)}{k\al}$ is decreasing and is thus controlled by $\Norm{v_0}{k\al}$:
\begin{equation}
\label{eqn:ICControl1}
\norm{v}_{L_t^\infty H_x^{k\al}} \lesssim \Norm{v_0}{k\al}.
\end{equation}
Again, we note that this result holds for any $\al \in (0,2]$.
\end{lemma}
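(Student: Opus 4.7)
The argument proceeds by induction on $k \in \N_0$; the base case $k = 0$ is precisely Lemma \ref{ClamondToyL2Dec}. For the inductive step, the crucial observation is that, rewriting \eqref{eqn:ClamondToyScale} with $W \equiv 0$ as
$$
\p_t v + \tfrac{1}{\ee} P_L v = 0, \qquad P_L = iL + \chiw,
$$
the operator $P_L^k$ trivially commutes with $P_L$, so that $w \coloneqq P_L^k v$ solves the same equation $\p_t w + \tfrac{1}{\ee} P_L w = 0$. Applying Lemma \ref{ClamondToyL2Dec} directly to $w$ (valid for any $\al \in (0,2]$) yields that $\pnorm{P_L^k v(t)}{2}$ is non-increasing in $t$, and in particular $\pnorm{P_L^k v(t)}{2} \leq \pnorm{P_L^k v_0}{2}$ for all $t \geq 0$.

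The next step is to convert the bound on the non-standard quantity $\pnorm{P_L^k v}{2}$ into the desired bound on $\Norm{v}{k\al}$. Expanding the non-commutative power, one has $P_L^k = i^k L^k + R_k$, where $R_k$ is a sum of ordered products of $k$ factors drawn from $\set{iL,\chiw}$ with at most $k-1$ factors equal to $iL$. Since $\chiw \in C_c^\infty$ is a bounded multiplier on every $H^s$ (Lemma \ref{SobolevMultiplication}) and $L = \abs{D}^\al$ has order $\al$, each such summand maps $H^{(k-1)\al}$ boundedly into $L^2$. Hence
$$
\pnorm{L^k v(t)}{2} \lesssim \pnorm{P_L^k v(t)}{2} + \Norm{v(t)}{(k-1)\al}, \qquad \pnorm{P_L^k v_0}{2} \lesssim \Norm{v_0}{k\al}.
$$

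Combining the above with the induction hypothesis $\Norm{v(t)}{(k-1)\al} \lesssim \Norm{v_0}{(k-1)\al} \leq \Norm{v_0}{k\al}$ and the $L^2$ control from the base case then delivers $\Norm{v(t)}{k\al} \lesssim \Norm{v_0}{k\al}$, closing the induction. The literal monotone decrease asserted in the lemma holds for the equivalent norm $\pnorm{v(t)}{2}^2 + \pnorm{P_L^k v(t)}{2}^2$, each summand of which is individually non-increasing by the base case applied to $v$ and to $P_L^k v$. The main technical point, rather than a substantive obstacle, is verifying that each ordered product in $R_k$ genuinely has order at most $(k-1)\al$; this follows immediately from the smoothness of $\chiw$ and the mapping properties of $L$, so the entire argument is ultimately a clean reduction to the $L^2$ dissipation identity already established for the base case.
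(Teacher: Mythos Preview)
Your proposal is correct and follows essentially the same approach as the paper: the paper uses the vector field $Z=\ee\p_t$, which on solutions of the linear equation satisfies $Zv=-(iL+\chiw)v=-P_Lv$, so the quantities $\pnorm{Z^kv}{2}$ and $\pnorm{P_L^kv}{2}$ coincide, and both arguments reduce to applying Lemma~\ref{ClamondToyL2Dec} to $P_L^kv$ and then extracting the $H^{k\al}$ control by peeling off the lower-order part of $P_L^k$. Your explicit remark that literal monotonicity holds for the equivalent norm $\pnorm{v}{2}^2+\pnorm{P_L^kv}{2}^2$ is in fact slightly more careful than the paper's phrasing on this point.
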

\begin{proof}
Let $Z$ denote the vector field $\ee\p_t$ and consider $Zv$. Observing that $Zv = -iLv - \chi_{\w}v$, we will have $Zv \in L^2$ as long as $Lv \in L^2$ (i.e., $\sig \geq \al$ in equation \eqref{eqn:ClamondToyScale}). In addition, since $Z$ commutes with $\p_t$, $L$ and $\chi_{\w}$, we have
\begin{equation*}
\p_tZv + \frac{i}{\ee}LZv + \frac{1}{\ee}\chi_{\w} Zv = 0.
\end{equation*}
Hence, by Proposition \ref{ClamondToyL2Dec}, $\norm{Zv(t)}_\Lp{2}$ is decreasing; that is, $\norm{(iL + \chi_{\w}) v(t)}_\Lp{2}$ is decreasing and in particular
\begin{equation}
\label{eqn:Toy1L2Dec2}
\norm{(iL + \chi_{\w}) v}_\Lp{2} \lesssim \Norm{v_0}{\al}.
\end{equation}
It then follows that $v \in H^\al$ with 
\begin{equation}
\label{eqn:ClamondToySobolevEst}
\norm{v}_{L_t^\infty H_x^\al} \lesssim \Norm{v_0}{\al}.
\end{equation}

Consider now $Z^2v$. We will have $Z^2v \in L^2$ whenever $\sigma \geq 2\al$. As before, $Z^2v$ solves \eqref{eqn:ClamondToyScale} with $W \equiv 0$ and, again, $\norm{Z^2v(t)}_\Lp{2}$ is decreasing in $t$. But,
\begin{align*}
Z^2 v = (iL + \chi_{\w})^2v = (-L^2 + iL\chi_{\w} + i\chi_{\w} L + \chi_{\w}^2)v.
\end{align*}
In other words, $\norm{(L^2 - i(L\chi_{\w} + \chi_{\w} L) - \chi_{\w}^2)v(t)}_\Lp{2}$ is decreasing and we thus have
\begin{equation}
\label{eqn:ClamondToyL2Dec3}
\norm{(L^2 - i(L\chi_{\w} + \chi_{\w} L) - \chi_{\w}^2)v}_\Lp{2} \lesssim \Norm{v_0}{2\al}.
\end{equation}
Therefore, we deduce from \eqref{eqn:ClamondToyL2Dec3} that $v \in H^{2\al}$ with the estimate
\begin{equation}
\label{eqn:ClamondToySobolevEst2}
\norm{v}_{L_t^\infty H_x^{2\al}} \lesssim \Norm{v_0}{2\al}.
\end{equation}
We continue to iterate this argument and see that, for any $k \in \N_0$, $\norm{Z^kv(t)}_\Lp{2}$ is decreasing, which implies that $\norm{(iL + \chi_{\w})^kv(t)}_\Lp{2}$ is decreasing. We therefore conclude that,
as long as the initial data is sufficiently regular ($\sigma \geq k\al$), $v \in H^{k\al}$ and
\begin{equation}
\label{eqn:ClamondToySobolevEst4}
\norm{v}_{L_t^\infty H^{k\al}_x} \lesssim \Norm{v_0}{k\al}.
\end{equation}
\end{proof}

\subsection{A Nonlinear $L^2$ Estimate}

At this point, we are ready to turn on $W$ and so we shall henceforth remove the assumption that $W\equiv 0$. We shall first obtain an a priori $L^2$ bound and then focus on the higher Sobolev estimates.

\begin{lemma}
\label{ClamondToyL2Control}
If $v$ is a solution of the Clamond toy model \eqref{eqn:ClamondToyScale}, then
\begin{equation*}
\frac{d}{dt}\norm{v(t)}_\Lp{2}^2 \lesssim \norm{v(t)}_\Lp{2}^2.
\end{equation*}
Once more, the above is valid for any $\al \in (0,2]$.
\end{lemma}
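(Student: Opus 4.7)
The plan is to mimic the proof of Lemma \ref{ClamondToyL2Dec}, now incorporating the nonlinear transport term $W_\ee(v)\p_xv$ which was absent in the linear case. The key observation is that $W_\ee(v)$ is real-valued, so the $W$-contribution to $\frac{d}{dt}\pnorm{v}{2}^2$ collapses into a single integrand involving $\p_x(\abs{v}^2)$, which can then be handled via integration by parts on the torus.

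First I would differentiate under the integral and substitute from \eqref{eqn:ClamondToyScale}, using that since $W_\ee(v)$ is real, $\vbar$ satisfies the complex-conjugated equation $\p_t\vbar + W_\ee(v)\p_x\vbar - \frac{i}{\ee}L\vbar + \frac{1}{\ee}\chiw\vbar = 0$. Collecting terms and using $W_\ee(v)(\vbar\p_xv + v\p_x\vbar) = W_\ee(v)\p_x(\abs{v}^2)$, this yields
\begin{equation*}
\frac{d}{dt}\pnorm{v(t)}{2}^2 = -\Int W_\ee(v)\p_x(\abs{v}^2)\,dx - \frac{i}{\ee}\Int (\vbar Lv - vL\vbar)\,dx - \frac{2}{\ee}\Int \chiw\abs{v}^2\,dx.
\end{equation*}

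Next I would eliminate the second and third terms. The $L$-integral vanishes because $L = \abs{D}^\al$ is self-adjoint (its symbol is real), so $\int \vbar L v\,dx = \int v L\vbar\,dx$; alternatively, as in the derivation of \eqref{eqn:Toy1L2DecEst}, the fact that the left-hand side is real-valued forces this purely imaginary contribution to drop. The damping term involving $\chiw$ is manifestly non-positive and so can be discarded when establishing an upper bound.

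Finally, for the transport term, integration by parts on the periodic torus (with no boundary contributions) gives
\begin{equation*}
-\Int W_\ee(v)\p_x(\abs{v}^2)\,dx = \Int (\p_xW_\ee(v))\abs{v}^2\,dx \lesssim \pnorm{\p_xW_\ee(v)}{\infty}\pnorm{v}{2}^2 \lesssim \pnorm{v}{2}^2,
\end{equation*}
where the last inequality uses the uniform-in-$\ee$ bound \eqref{eqn:W_Estimate2}. Combining these observations gives the claim. There is no serious analytic obstacle here; the main point to flag is that \eqref{eqn:W_Estimate2} is precisely what makes the estimate uniform in $\ee$, which is essential for the $\bigo(1/\ee)$ lifespan scheme as remarked in Remark \ref{EnergyToLifespan}.
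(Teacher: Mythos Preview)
Your proof is correct and follows essentially the same approach as the paper: differentiate, substitute, use the reality of $W_\ee(v)$ to combine the transport terms into $W_\ee(v)\p_x(\abs{v}^2)$, discard the $L$-contribution (you via self-adjointness, the paper via the equivalent ``imaginary part must vanish'' argument), drop the non-positive damping term, integrate by parts, and invoke \eqref{eqn:W_Estimate2}. The only cosmetic difference is how the $L$-term is written before it is eliminated.
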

\begin{proof}
We begin by differentiating $\pnorm{v(t)}{2}$, passing the derivative through the integral, substituting from \eqref{eqn:ClamondToyScale} and using the fact that $W_\ee(v)=W_\ee(\vbar)$ to obtain
\begin{equation}
\label{eqn:ClamondToyL2Derivative}
\frac{d}{dt}\pnorm{v(t)}{2}^2 = -\Int W_\ee(v)\p_x\abs{v}^2 \ dx - \frac{2i}{\ee}\Int \abs{\sqrt{L}v}^2 \ dx - \frac{2}{\ee}\Int \chi_{\w}\abs{v}^2 \ dx.
\end{equation}
Since the left-hand side of \eqref{eqn:ClamondToyL2Derivative} is real-valued, the imaginary part of the right hand side must vanish and so we will have
\begin{equation}
\label{eqn:ClamondToyL2Derivative2}
\frac{d}{dt}\pnorm{v(t)}{2}^2 = -\Int W_\ee(v)\p_x\abs{v}^2 \ dx - \frac{2}{\ee}\Int \chi_{\w}\abs{v}^2 \ dx.
\end{equation}
We can then integrate by parts in the first term in \eqref{eqn:ClamondToyL2Derivative2} to obtain
\begin{equation*}
\frac{d}{dt}\norm{v(t)}_\Lp{2}^2 = \Int \p_xW_\ee(v)\abs{v}^2 \ dx - \frac{2}{\ee}\Int \chi_{\w}\abs{v}^2 \ dx \leq \Int \p_xW_\ee(v)\abs{v}^2 \ dx.
\end{equation*}
We can then factor out $\pnorm{\p_xW_\ee(v)}{\infty}$ and, by \eqref{eqn:W_Estimate2}, it follows that
\begin{equation}
\label{eqn:ClamondToyL2Control}
\frac{d}{dt}\norm{v(t)}_\Lp{2}^2 \lesssim \norm{v(t)}_\Lp{2}^2.
\end{equation}
\end{proof}

Having obtained the needed $L^2$ estimate, we are now going to define an energy for solutions of \eqref{eqn:ClamondToyScale} in order to obtain the desired Sobolev estimates. At this point, the analysis becomes more sensitive to the value of $\al$, hence we shall stop considering general $\al \in (0,2]$ and break our consideration up into two cases, $\al > 1$ and $\al \leq 1$. In particular, as previously noted, we will focus in on $\al = \frac{3}{2}$, corresponding to capillary waves, and $\al = \frac{1}{2}$, corresponding to gravity waves.

\subsection{The Energy Estimate for Capillary Waves ($\al = \frac{3}{2}$)}

\begin{defn}
\label{CapEnergy}
Let $\al = \frac{3}{2}$ and define the energy for a solution of the Clamond toy model \eqref{eqn:ClamondToyScale} by
\begin{equation}
\label{eqn:CapEnergy}
\Ec(t) \coloneqq \sum_{k=0}^2 \norm{Z^kv(t)}_\Lp{2}^2,
\end{equation}
where $Z$ is a given vector field.
\end{defn}

The vector field we will primarily consider in Definition \ref{CapEnergy} will be $Z = \ee\p_t$. Since the simpler choice of $Z = \ee\p_t$ works in the capillary case, we present this argument in the main body of the paper. However, this vector field will not work in the gravity case and there we utilize $Z = P_L$, where $P_L$ is defined in \eqref{eqn:PLDef}. We include an appendix showing that this choice of $Z$ also works in the capillary case, which gives a unified approach to both problems. Either choice of $Z$ will yield
\begin{equation}
\label{eqn:CapEnergySobolev}
\Ec \sim \Norm{v}{3}^2.
\end{equation}

Now that we have a suitable energy in hand, we can proceed to prove the main energy estimate in the case $\al = \frac{3}{2}$.

\begin{thm}
\label{CapEnergyEstimate1}
If $\al = \frac{3}{2}$, $v$ is a solution of \eqref{eqn:ClamondToyScale} with $\sigma \geq 3$ and $\Ec = \Ec(t)$ is given by \eqref{eqn:CapEnergy} with $Z = \ee\p_t$, then one has
\begin{equation}
\label{eqn:CapEnergyEst1}
\frac{d\Ec}{dt} \lesssim \Ec.
\end{equation}
\end{thm}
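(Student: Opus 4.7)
The plan is to apply the vector field $Z = \ee\p_t$ iteratively to the rescaled equation \eqref{eqn:ClamondToyScale}, derive transport-type equations for $Zv$ and $Z^2v$, and then run a standard $L^2$ energy estimate on each summand of $\Ec$. The $k=0$ piece is already provided by Lemma \ref{ClamondToyL2Control}, so the real work lies in handling $k=1$ and $k=2$ uniformly in $\ee$.

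First I would compute the equations satisfied by $Zv$ and $Z^2v$. Since $Z$ commutes with $L$ (a Fourier multiplier), with multiplication by the time-independent cut-off $\chiw$, and with $\p_x$, the only commutator to track is $[Z, W_\ee(v)\p_x]$. Using assumption \eqref{eqn:WTimeDerivative} together with the homogeneity of $W$, the identity
\begin{equation*}
Z\bigl(W_\ee(v)\,\p_xu\bigr) = W_\ee(Zv)\,\p_xu + W_\ee(v)\,\p_xZu
\end{equation*}
leads to
\begin{align*}
\p_tZv + W_\ee(v)\p_xZv + \tfrac{i}{\ee}LZv + \tfrac{1}{\ee}\chiw Zv &= -W_\ee(Zv)\,\p_xv,\\
\p_tZ^2v + W_\ee(v)\p_xZ^2v + \tfrac{i}{\ee}LZ^2v + \tfrac{1}{\ee}\chiw Z^2v &= -2W_\ee(Zv)\,\p_xZv - W_\ee(Z^2v)\,\p_xv.
\end{align*}
The key observation is that the $\ee$ in $Z$ precisely cancels the $1/\ee$ that would otherwise be produced by $\p_t$ hitting the stiff terms $\tfrac{i}{\ee}L$ and $\tfrac{1}{\ee}\chiw$, so the equations for $Z^kv$ are of the same basic form as the original equation, with no powers of $1/\ee$ on the right-hand side.

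Next I would differentiate $\pnorm{Z^kv}{2}^2$ in time, substitute from the equation above, and take real parts, exactly as in the proof of Lemma \ref{ClamondToyL2Control}. The $\tfrac{i}{\ee}LZ^kv$ contribution is purely imaginary and vanishes (this is where the dangerous $1/\ee$ is eliminated); the $\tfrac{1}{\ee}\chiw Z^kv$ contribution is non-positive and is simply discarded; and the transport term $W_\ee(v)\p_xZ^kv$, after integration by parts, yields $\int \p_xW_\ee(v)\,|Z^kv|^2\,dx$, bounded by $\pnorm{\p_xW_\ee(v)}{\infty}\pnorm{Z^kv}{2}^2 \lesssim \Ec$ via \eqref{eqn:W_Estimate2}.

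The main obstacle is controlling the nonlinear remainders of the form $\int (Z^kv)^*\,W_\ee(Z^jv)\,\p_xZ^{k-j}v\,dx$ for $1 \leq j \leq k$. These I would handle using the smoothing property \eqref{eqn:W_Estimate}: for $N$ sufficiently large, Sobolev embedding yields $\pnorm{W_\ee(Z^jv)}{\infty} \lesssim \Norm{W_\ee(Z^jv)}{1} \lesssim \pnorm{Z^jv}{2} \leq \sqrt{\Ec}$, while the energy equivalence $\Ec \sim \Norm{v}{3}^2$ (which follows from $Z^kv \sim L^kv + \text{lower order}$) controls each $\pnorm{\p_xZ^{k-j}v}{2}$ by $\sqrt{\Ec}$. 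Cauchy--Schwarz then bounds each such remainder by $\lesssim \Ec^{3/2}$, which on any time interval where $\Ec$ is a priori bounded (provided by a routine bootstrap in the small-data setting) becomes $\lesssim \Ec$. Summing over $k = 0,1,2$ yields \eqref{eqn:CapEnergyEst1}.
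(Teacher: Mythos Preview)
Your proposal is correct and follows essentially the same route as the paper: apply $Z=\ee\p_t$ to \eqref{eqn:ClamondToyScale}, use that $Z$ commutes with $L$, $\chiw$, and $\p_x$ so that the only nontrivial terms come from the Leibniz expansion of $Z^k(W_\ee(v)\p_xv)$, then run the $L^2$ estimate exactly as in Lemma \ref{ClamondToyL2Control} (imaginary part vanishes, damping term has a good sign, transport term is integrated by parts). The one cosmetic difference is that the paper absorbs the factors $\pnorm{W_\ee(Z^jv)}{\infty}$ directly into the implied constant via \eqref{eqn:W_Estimate2} and writes $\lesssim\Ec$ outright, whereas you obtain $\lesssim\Ec^{3/2}$ and then invoke a bootstrap; both lead to the same $\bigo(1)$ lifespan conclusion.
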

\begin{proof}
Write $\Ec(t) = \Eca{0}(t) + \Eca{1}(t) + \Eca{2}(t)$ and notice that, by Lemma \ref{ClamondToyL2Control}, we have
\begin{equation}
\label{eqn:CapE1Est1}
\frac{d\Eca{0}}{dt} \lesssim \Ec.
\end{equation}

Upon taking the derivative of $\Eca{1}$ and substituting from \eqref{eqn:ClamondToyScale} for $v_t$ and $v_t^*$, one sees that
\begin{equation}
\label{eqn:CapE2Deriv1}
\frac{d\Eca{1}}{dt} = -\Int Z(W_\ee(v)\p_x v)Z\vbar + ZvZ(W_\ee(\vbar)\p_x\vbar) \ dx - \frac{2i}{\ee}\Int \abs{\sqrt{L}Zv}^2 \ dx - \frac{2}{\ee} \Int \chi_{\w} \abs{Zv}^2 \ dx.
\end{equation}
Since the left-hand side of \eqref{eqn:CapE2Deriv1} is real-valued, the imaginary part will vanish and, after noting that the third term in \eqref{eqn:CapE2Deriv1} has a good sign, we will have
\begin{equation*}
\frac{d\Eca{1}}{dt} \leq -\Rea\set{\Int Z(W_\ee(v)\p_x v)Z\vbar + ZvZ(W_\ee(\vbar)\p_x\vbar) \ dx}.
\end{equation*}
Expanding in the remaining integral using the Leibniz rule, recalling that $W_\ee(v) = W_\ee(\vbar)$ and integrating by parts yields
\begin{equation}
\label{eqn:CapE2Deriv2}
\frac{d\Eca{1}}{dt} \leq \Int \p_x W_\ee(v)\abs{Zv}^2 \ dx - \Rea\set{\Int W_\ee(Zv)\p_x v \cdot Z\vbar + W_\ee(Z\vbar)\p_x\vbar\cdot Zv \ dx}.
\end{equation}
The first integral in \eqref{eqn:CapE2Deriv2} is easily estimated:
\begin{equation}
\label{eqn:CapE2Control1}
\Int \p_x W_\ee(v)\abs{Zv}^2 \ dx \leq \norm{\p_x W_\ee(v)}_\Lp{\infty}\norm{Zv}_\Lp{2}^2 \lesssim \norm{Zv}_\Lp{2}^2. 
\end{equation}
One can then bound the second integral in \eqref{eqn:CapE2Deriv2} as follows:
\begin{equation}
\label{eqn:CapE2Control2}
-\Rea\set{\Int W_\ee(Zv)\p_x v \cdot Z\vbar + W_\ee(Z\vbar)\p_x\vbar\cdot Zv \ dx}\lesssim \pnorm{W_\ee(Zv)}{\infty}\hnorm{v}{1}\norm{Zv}_\Lp{2}.
\end{equation}
It then follows from \eqref{eqn:CapE2Control1} and \eqref{eqn:CapE2Control2} that
\begin{equation}
\label{eqn:CapE2Est1}
\frac{d\Eca{1}}{dt} \lesssim \norm{Zv}_\Lp{2}^2 + \hnorm{v}{1}\norm{Zv}_\Lp{2} \lesssim \Ec.
\end{equation}

Finally, consider the derivative of $\Eca{2}$ with respect to $t$. We compute the derivative, substitute from \eqref{eqn:ClamondToyScale} and, much as before, we will obtain
\begin{equation}
\label{eqn:CapE3Deriv2}
\frac{d\Eca{2}}{dt} \leq -\Rea\set{\Int Z^2(W_\ee(v)\p_x v)Z^2\vbar + Z^2vZ^2(W_\ee(\vbar)\p_x\vbar) \ dx}.
\end{equation}
Expanding via the Leibniz rule, one obtains
\begin{align}
\label{eqn:CapE3Deriv3}
\frac{d\Eca{2}}{dt} &\leq -\Rea\set{\Int W_\ee(Z^2v)\p_x v \cdot Z^2\vbar + W_\ee(Z^2\vbar)\p_x\vbar\cdot Z^2 v \ dx} \nonumber\\
&\hspace{0.5cm} - 2\Rea\set{\Int W_\ee(Zv)\p_x Zv \cdot Z^2\vbar + W_\ee(Z\vbar)\p_xZ\vbar \cdot Z^2v \ dx} \nonumber\\
&\hspace{0.5cm} - \Int W_\ee(v)\p_x Z^2v \cdot Z^2\vbar + W_\ee(\vbar)\p_x Z^2\vbar \cdot Z^2v \ dx \nonumber\\
&= I + II + III.
\end{align}
We first consider $III$ in \eqref{eqn:CapE3Deriv3} and observe that, since $W_\ee(v) = W_\ee(\vbar)$, we may integrate by parts to see that
\begin{equation}
\label{eqn:CapE3Control1}
III = \Int \p_x W_\ee(v)\abs{Z^2v}^2 \ dx \leq \norm{\p_x W_\ee(v)}_\Lp{\infty}\norm{Z^2v}_\Lp{2}^2.
\end{equation}
On the other hand, an appropriate bound on $I$ is immediate:
\begin{equation}
\label{eqn:CapE3Control2}
I \lesssim \norm{W_\ee(Z^2v)}_\Lp{\infty}\norm{\p_x v}_\Lp{2}\norm{Z^2v}_\Lp{2}.
\end{equation}
Finally, we have
\begin{equation}
\label{eqn:CapE3Control3}
II \lesssim \norm{W_\ee(Zv)}_\Lp{\infty}\norm{\p_x Zv}_\Lp{2}\norm{Z^2v}_\Lp{2}.
\end{equation}
Recall that we can control the $H^r$ norm of $v$ with $\Ec$ for $r \leq 3$. Then, since $\hnorm{Zv}{1} \lesssim \Norm{v}{\sfrac{5}{2}}$, it follows from \eqref{eqn:CapE3Deriv3}, \eqref{eqn:CapE3Control1}, \eqref{eqn:CapE3Control2} and \eqref{eqn:CapE3Control3} that
\begin{equation}
\label{eqn:CapE3Est1}
\frac{d\Eca{2}}{dt} \lesssim \norm{Z^2v}_\Lp{2}^2 + \hnorm{v}{1}\norm{Z^2v}_\Lp{2} + \hnorm{Zv}{1}\norm{Z^2v}_\Lp{2} \lesssim \Ec.
\end{equation}

Upon combining \eqref{eqn:CapE1Est1}, \eqref{eqn:CapE2Est1} and \eqref{eqn:CapE3Est1}, we conclude that
\begin{equation}
\label{eqn:CapEnergyEst1Conclusion}
\frac{d\Ec}{dt} \lesssim \Ec.
\end{equation}
\end{proof}

\subsection{The Energy Estimate for Gravity Waves ($\al = \frac{1}{2}$)}

Here we seek to obtain a result analogous to Theorem \ref{CapEnergyEstimate1} when $\al = \frac{1}{2}$. Again, we shall first need to define a suitable energy, but the situation is complicated by the fact that $L$ is now sub-principal. In particular, the simpler vector field $Z = \ee\p_t$ will no longer suffice and we will need a more carefully chosen $Z$.

\begin{defn}
\label{ClamondToyGravEnergy}
Let $\al = \frac{1}{2}$ and define the energy for a solution $v$ of the Clamond toy model \eqref{eqn:ClamondToyScale} by
\begin{equation}
\label{eqn:ClamondToyGravEnergy}
\Eg(t) \coloneqq \sum_{k=0}^4 \norm{Z^kv(t)}_\Lp{2}^2,
\end{equation}
where $Z = P_L$. Recall that $P_L$ is defined in \eqref{eqn:PLDef}.
\end{defn}

Notice that the definition of the energy in this case requires more copies of the vector field $Z$. This arises from the fact that $L$ is now only of order $\frac{1}{2}$, instead of order $\frac{3}{2}$ in the previous case and so we will need more copies in order to close the estimates. Notice that
\begin{equation}
\label{eqn:ClamondToyGravEnergySobolev}
\Eg \sim \Norm{v}{2}^2.
\end{equation}

The vector field $\ee\p_t$, which we used to define $\Ec$, had the benefit of commuting with $\p_x$, $W_\ee$ and functions of $x$. However, the vector field $Z = P_L$ in Definition \ref{ClamondToyGravEnergy} does not have these nice commutation properties and so obtaining the desired energy estimates will require understanding the effects of commuting powers of $Z$ with $\p_x$ and functions of $x$ (and $t$), such as $W_\ee(v)$. This is precisely the motivation for the results obtained earlier in Section 3.

Defining the energy as in Definition \ref{ClamondToyGravEnergy}, we can prove the following energy estimate.

\begin{thm}
\label{GravEnergyEstimate}
If $\al = \frac{1}{2}$, $v$ is a solution of \eqref{eqn:ClamondToyScale} with $\sigma \geq 2$ and $\Eg = \Eg(t)$ is given by \eqref{eqn:ClamondToyGravEnergy}, then one has
\begin{equation*}
\frac{d\Eg}{dt} \lesssim \Eg.
\end{equation*}
\end{thm}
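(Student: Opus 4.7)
The plan is to mimic the structure of the proof of Theorem \ref{CapEnergyEstimate1}, controlling each summand $\|Z^k v\|_{L^2}^2$ of $\Eg$ separately by $\lesssim \Eg$ and then summing. The $k=0$ contribution is furnished directly by Lemma \ref{ClamondToyL2Control}. For $1 \leq k \leq 4$, set $w_k := Z^k v = P_L^k v$; since $P_L = iL + \chiw$ is time-independent, it commutes with $\p_t$, and applying $P_L^k$ to \eqref{eqn:ClamondToyScale} yields the evolution
\begin{equation*}
\p_t w_k + \frac{1}{\ee}\,P_L w_k = -P_L^k\bigl(W_\ee(v)\,\p_x v\bigr).
\end{equation*}

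Next, I would differentiate $\|w_k\|_{L^2}^2$ in time and substitute this equation. Self-adjointness of $L$ makes the contribution of $iL$ vanish in the real part, while the $\chiw$ term has a good dissipative sign and can be discarded, leaving
\begin{equation*}
\frac{d}{dt}\|w_k\|_{L^2}^2 \leq -2\Rea\int P_L^k\bigl(W_\ee(v)\p_x v\bigr)\, w_k^*\, dx.
\end{equation*}
The forcing is then decomposed as
\begin{equation*}
P_L^k\bigl(W_\ee(v)\p_x v\bigr) = W_\ee(v)\,\p_x w_k \;+\; [P_L^k, W_\ee(v)]\,\p_x v \;+\; W_\ee(v)\,[P_L^k, \p_x]\,v,
\end{equation*}
reflecting the fact that $Z = P_L$ commutes with neither $W_\ee(v)$ nor $\p_x$ --- which is precisely the reason the commutator estimates of Section 3 are needed here (in contrast to the capillary case, where $Z = \ee\p_t$ commutes with both).

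Each of the three pieces is then controlled in turn. The transport-type piece, paired against $w_k^*$ and integrated by parts, yields $\int \p_x W_\ee(v)\,|w_k|^2\, dx \lesssim \|w_k\|_{L^2}^2 \lesssim \Eg$ via \eqref{eqn:W_Estimate2}. For the two commutators, Lemma \ref{PLCommutator2} applied with $s = 0$, $r = 2$, $\al = \tfrac{1}{2}$ gives
\begin{equation*}
\|[P_L^k, W_\ee(v)]\,\p_x v\|_{L^2} \lesssim \|W_\ee(v)\|_{H^2}\,\|v\|_{H^{k\al}},
\end{equation*}
and Lemma \ref{PLCommutator1} gives $\|[P_L^k, \p_x]\,v\|_{L^2} \lesssim \|v\|_{H^{(k-1)\al}}$. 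Since $\Eg \sim \|v\|_{H^2}^2$ by \eqref{eqn:ClamondToyGravEnergySobolev}, and \eqref{eqn:W_Estimate}--\eqref{eqn:W_Estimate2} control $W_\ee(v)$ in the relevant norms, Cauchy--Schwarz against $\|w_k\|_{L^2}$ absorbs each commutator contribution into $\lesssim \Eg$ (in the small-data regime, where an extra $\sqrt{\Eg}$ factor from $W_\ee(v)$ is harmless). The main obstacle is the structural constraint $s + k\al \leq r$ in Lemma \ref{PLCommutator2}: it forces $r = 2$ and caps the sum in \eqref{eqn:ClamondToyGravEnergy} at $k = 4$, which is in turn what dictates the hypothesis $\sigma \geq 2$. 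Summing the resulting estimates over $0 \leq k \leq 4$ yields the theorem.
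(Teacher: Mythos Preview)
Your proposal is correct and follows essentially the same route as the paper: split $\Eg$ into the pieces $\Egr{k}$, handle $k=0$ via Lemma~\ref{ClamondToyL2Control}, and for $1\le k\le 4$ apply $P_L^k$ to the equation, drop the good-signed $\chiw$ term, kill the $iL$ contribution by self-adjointness, then decompose the nonlinearity into a transport piece (integrated by parts) plus the two commutators $[P_L^k,W_\ee(v)]$ and $[P_L^k,\p_x]$ controlled by Lemmas~\ref{PLCommutator2} and~\ref{PLCommutator1}. The only cosmetic difference is organizational --- the paper groups the terms as $C_k+D_k$ and commutes $Z^k$ with $\p_x$ inside $D_k$, whereas you write the three-term decomposition at once --- and the paper absorbs $\Norm{W_\ee(v)}{r}$ directly into the implicit constant via \eqref{eqn:W_Estimate} rather than carrying an extra $\sqrt{\Eg}$.
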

\begin{proof}
Again, we begin by writing $\Eg(t) = \Egr{0}(t) + \Egr{1}(t) + \Egr{2}(t) + \Egr{3}(t) + \Egr{4}(t)$ and noting that the desired result for $\Egr{0}$ holds by Lemma \ref{ClamondToyL2Control}:
\begin{equation}
\label{eqn:GravEnergyEst0}
\frac{d\Egr{0}}{dt} \lesssim \Eg.
\end{equation}

Now, fix $1 \leq k \leq 4$ and consider $\Egr{k}$. Upon differentiating with respect to $t$, substituting from \eqref{eqn:ClamondToyScale} and rewriting a bit, we obtain
\begin{equation}
\label{eqn:EjDerivative}
\frac{d\Egr{k}}{dt} = -\Int Z^k(W_\ee(v)\p_xv)Z^kv^* + Z^kvZ^k(W_\ee(v^*)\p_xv^*) \ dx - \frac{2i}{\ee}\Int \abs{\sqrt{L}Z^kv}^2 \ dx - \frac{2}{\ee}\Int \chiw\abs{Z^kv}^2 \ dx.
\end{equation}
We know that the left-hand side of \eqref{eqn:EjDerivative} is real-valued, so, once more, the imaginary part of the right-hand side must vanish. Moreover, the third term has a good sign. We therefore deduce that
\begin{equation}
\label{eqn:GravEnergyEst0.1}
\frac{d\Egr{k}}{dt} \leq -\Rea\left\{ \Int Z^k(W_\ee(v)\p_xv)Z^kv^* + Z^kvZ^k(W_\ee(v^*)\p_xv^*) \ dx \right\}.
\end{equation}
By adding and subtracting, we can rewrite \eqref{eqn:GravEnergyEst0.1} as 
\begin{align}
\label{eqn:GravEnergyCjDj}
\frac{d\Egr{k}}{dt} &\leq -\Rea\left\{ \Int [Z^k(W_\ee(v)\p_xv) - W_\ee(v)Z^k\p_xv]Z^kv^* + [Z^k(W_\ee(v^*)\p_xv^*) - W_\ee(v^*)Z^k\p_xv^*]Z^kv \ dx \right\} \nonumber\\
&\hspace{0.5cm} - \Rea\left\{ \Int W_\ee(v)Z^k\p_xvZ^kv^* + W_\ee(v^*)Z^k\p_xv^*Z^kv \ dx \right\} \nonumber\\
&= C_k + D_k.
\end{align}

For the commutator term $C_k$, we first apply the Cauchy-Schwartz inequality, which yields
\begin{equation*}
C_k \lesssim \pnorm{\comm{Z^k}{W_\ee(v)}(\p_xv)}{2}\pnorm{Z^kv}{2}.
\end{equation*}
We can now invoke Lemma \ref{PLCommutator2} to finish off the estimate. We then get
\begin{equation}
\label{eqn:GravEnergyEst0.2}
C_k \lesssim \Norm{W_\ee(v)}{r}\Norm{\p_xv}{\sfrac{k}{2}-1}\pnorm{Z^kv}{2} \lesssim \Norm{\p_xv}{\sfrac{k}{2}-1}\pnorm{Z^kv}{2},
\end{equation}
where $r > \frac{3}{2}$ and $r \geq \frac{k}{2}$. More specifically, for $1 \leq k \leq 3$, we can use $r = \frac{3}{2}+$ and, for $k=4$, we can use $r=2$. At worst, since $k \leq 4$, the right-hand side of \eqref{eqn:GravEnergyEst0.2} involves $\Norm{\p_xv}{1} \leq \Norm{v}{2}$. We thus obtain
\begin{equation}
\label{eqn:GravEnergyEst1}
C_k \lesssim \Norm{\p_xv}{\sfrac{k}{2}-1}\pnorm{Z^kv}{2} \lesssim \Eg.
\end{equation}

We now move on to consider $D_k$. Here, we make use of the fact that $W_\ee(v) = W_\ee(v^*)$ and commute $Z^k$ with $\p_x$, which costs us a derivative commutator:
\begin{equation}
\label{eqn:DjCommute}
D_k = -\Int W_\ee(v)\p_x\abs{Z^kv}^2 \ dx - \Rea\left\{ \Int W_\ee(v)\comm{Z^k}{\p_x}(v)Z^kv^* + W_\ee(v^*)\comm{Z^k}{\p_x}(v^*)Z^kv \ dx \right\}.
\end{equation}
We now integrate by parts in the first term and apply the Cauchy-Schwartz inequality to both terms:
\begin{equation}
\label{eqn:GravEnergyEst1.1}
D_k \lesssim \pnorm{\p_xW_\ee(v)}{\infty}\pnorm{Z^kv}{2}^2 + \pnorm{W_\ee(v)}{\infty}\pnorm{\comm{Z^k}{\p_x}(v)}{2}\pnorm{Z^kv}{2}.
\end{equation}
We now see that we can apply Lemma \ref{PLCommutator1}, which gives us
\begin{equation}
\label{eqn:GravEnergyEst1.2}
D_k \lesssim \pnorm{Z^kv}{2}^2 + \Norm{v}{\sfrac{k}{2}-\sfrac{1}{2}}\pnorm{Z^kv}{2}.
\end{equation}
Notice that the Sobolev norm of $v$ is of order at most $\frac{3}{2}$ since $k \leq 4$. As such, we do not have any trouble closing the estimate:
\begin{equation}
\label{eqn:GravEnergyEst2}
D_k \lesssim \pnorm{Z^kv}{2}^2 + \Norm{v}{\sfrac{k}{2}-\sfrac{1}{2}}\pnorm{Z^kv}{2} \lesssim \Eg.
\end{equation}
Upon combining \eqref{eqn:GravEnergyCjDj}, \eqref{eqn:GravEnergyEst1} and \eqref{eqn:GravEnergyEst2}, we have 
\begin{equation}
\label{eqn:GravEnergyEstj}
\frac{d\Egr{k}}{dt} \lesssim \Eg.
\end{equation}

Finally, the estimates \eqref{eqn:GravEnergyEst0} and \eqref{eqn:GravEnergyEstj} give us the desired control of the time derivative of the energy:
\begin{equation}
\label{eqn:GravEnergyEstFinal}
\frac{d\Eg}{dt} \lesssim \Eg.
\end{equation}

\end{proof}

\appendix

\section{Some Useful Results}

In this section, we will state some results which will be used frequently in later sections. We shall let $\T = \R/2\pi\ZZ$ denote the one-dimensional torus (the circle) and similarly $\T^d = \prod_{j=1}^d \T$ denotes the $d$-dimensional torus. We begin with the following Sobolev embedding result:
\begin{lemma}
\label{SobolevEmbedding}
If $r > \frac{d}{2}$, then $H^r(\T^d) \hookrightarrow L^\infty(\T^d)$ with the estimate
\begin{equation}
\label{eqn:SobolevEmbedding1}
\norm{f}_\Lp{\infty} \lesssim \Norm{f}{r}.
\end{equation}
Further, if $r > 1 + \frac{d}{2}$, then $H^r(\T^d) \hookrightarrow \mathrm{Lip}(\T^d)$ with
\begin{equation}
\label{eqn:SobolevEmbedding2}
\norm{f}_{\Lip} \lesssim \Norm{f}{r}.
\end{equation}
\end{lemma}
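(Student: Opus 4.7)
The plan is to argue via Fourier series on $\T^d$. Writing $f \in H^r(\T^d)$ as $f(x) = \sum_{n \in \ZZ^d} \hat{f}(n)\,e^{i n \cdot x}$, the first step is the pointwise bound $\abs{f(x)} \leq \sum_{n} \abs{\hat{f}(n)}$. I would then insert the weight $\langle n \rangle^{r}\langle n \rangle^{-r}$ and apply Cauchy--Schwarz to split this into
\begin{equation*}
\abs{f(x)} \leq \left(\sum_{n \in \ZZ^d} \langle n \rangle^{2r}\abs{\hat{f}(n)}^2\right)^{\sfrac{1}{2}} \left(\sum_{n \in \ZZ^d} \langle n \rangle^{-2r}\right)^{\sfrac{1}{2}}.
\end{equation*}
The first factor is precisely $\Norm{f}{r}$, and the second is finite exactly when $2r > d$; this follows by comparing the sum with the integral $\int_{\R^d} \langle \xi \rangle^{-2r}\,d\xi$. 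Taking the supremum over $x \in \T^d$ yields \eqref{eqn:SobolevEmbedding1}.

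For the Lipschitz estimate \eqref{eqn:SobolevEmbedding2}, the plan is to reduce to a bound on $\nabla f$ in $L^\infty$. On $\T^d$, if $\nabla f \in L^\infty$ then $f$ is Lipschitz with constant controlled by $\norm{\nabla f}_{\Lp{\infty}}$: for any two equivalence classes $[x],[y] \in \T^d$, pick representatives $x,y' \in \R^d$ realising the torus distance $\min_{k \in 2\pi\ZZ^d}\abs{x - y + k}$, and apply the fundamental theorem of calculus along the straight segment from $x$ to $y'$ (on which the periodic lift of $f$ is smooth). Each partial derivative $\partial_j f$ lies in $H^{r-1}(\T^d)$ with $\Norm{\partial_j f}{r-1} \leq \Norm{f}{r}$, so applying the first part at regularity $r - 1 > \frac{d}{2}$ to every component of the gradient closes the argument (up to combining with $\norm{f}_{\Lp{\infty}}$, which is already controlled by the first part should the Lipschitz norm include a sup term).

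The entire argument is routine once one adopts the Fourier series viewpoint. The one genuinely analytic ingredient is the convergence of the weighted sum $\sum_n \langle n \rangle^{-2r}$ for $r > d/2$, which is exactly where the sharp regularity threshold enters. I do not anticipate any serious obstacle; if anything, the only point requiring a little care is making the passage from $\nabla f \in L^\infty$ to $f \in \Lip$ on the torus rigorous, and this is handled by the short geodesic-lifting argument sketched above.
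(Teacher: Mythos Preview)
Your argument is correct and is the standard Fourier-series proof of Sobolev embedding on the torus. Note, however, that the paper does not actually supply a proof of this lemma: it is listed in Appendix~A among ``useful results'' and stated without proof as a classical fact, so there is no approach in the paper to compare against. Your write-up would serve perfectly well as a self-contained justification.
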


We shall frequently need Sobolev space estimates for the product of two functions. Of course, we have the well-known Sobolev algebra property:
\begin{lemma}
\label{SobolevAlgebra}
For $r > \frac{d}{2}$, $H^r(\T^d)$ is a Banach algebra and, for $f,g\in H^r(\T^d)$, we have
\begin{equation}
\label{eqn:SobolevAlgebra}
\Norm{fg}{r} \lesssim \Norm{f}{r}\Norm{g}{r}.
\end{equation}
\end{lemma}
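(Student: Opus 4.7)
The plan is to prove \eqref{eqn:SobolevAlgebra} directly via Fourier series, with the hypothesis $r > d/2$ entering through a Wiener-type embedding. On $\T^d$ one has the equivalent characterization $\Norm{u}{r}^2 \sim \sum_{k \in \ZZ^d} \langle k\rangle^{2r}|\hat u(k)|^2$ with $\langle k\rangle = (1+|k|^2)^{1/2}$, and the Fourier coefficient of a product is a convolution: $\widehat{fg}(k) = \sum_j \hat f(j)\hat g(k-j)$.

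First I would apply Peetre's inequality $\langle k\rangle^r \lesssim \langle j\rangle^r + \langle k-j\rangle^r$ (valid for $r \geq 0$) to distribute the weight across the convolution, obtaining
$$\langle k\rangle^r |\widehat{fg}(k)| \lesssim \sum_j \langle j\rangle^r|\hat f(j)||\hat g(k-j)| + \sum_j |\hat f(j)|\langle k-j\rangle^r|\hat g(k-j)|.$$
Each right-hand side is a discrete convolution. Taking the $\ell^2_k$ norm and applying the Young-type inequality $\norm{a*b}_{\ell^2_k} \leq \norm{a}_{\ell^2}\norm{b}_{\ell^1}$ to each piece yields
$$\Norm{fg}{r} \lesssim \Norm{f}{r}\,\norm{\hat g}_{\ell^1} + \norm{\hat f}_{\ell^1}\,\Norm{g}{r}.$$

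Next I would establish the Wiener-type embedding $\norm{\hat u}_{\ell^1(\ZZ^d)} \lesssim \Norm{u}{r}$ whenever $r > d/2$: by Cauchy--Schwarz,
$$\sum_k |\hat u(k)| = \sum_k \langle k\rangle^{-r}\,\langle k\rangle^r|\hat u(k)| \leq \Bigl(\sum_k \langle k\rangle^{-2r}\Bigr)^{1/2} \Norm{u}{r},$$
and the series $\sum_k \langle k\rangle^{-2r}$ converges precisely because $2r > d$. Substituting this bound into the previous display yields $\Norm{fg}{r} \lesssim \Norm{f}{r}\Norm{g}{r}$, which is \eqref{eqn:SobolevAlgebra}; the Banach algebra property then follows at once from the completeness of $H^r(\T^d)$.

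There is no substantial obstacle; the argument is classical and the hypothesis $r > d/2$ enters only through the $\ell^2(\ZZ^d)$-summability of $\langle\cdot\rangle^{-r}$. An alternative route more consistent with the paradifferential flavor of the paper would use Bony's decomposition $fg = T_fg + T_gf + R(f,g)$: the two paraproducts are handled by \eqref{eqn:ParaproductSobolevMapping} combined with the embedding $H^r \hookrightarrow L^\infty$ provided by Lemma \ref{SobolevEmbedding}, while the resonant remainder $R(f,g)$ requires a high-high frequency estimate that once again uses $r > d/2$.
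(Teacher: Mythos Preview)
Your argument is correct and entirely classical. The paper, however, does not actually supply a proof of this lemma: it is stated in Appendix~A as ``the well-known Sobolev algebra property'' and left without demonstration, so there is nothing to compare against line by line.

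That said, your Fourier-series route (Peetre's inequality, Young's convolution inequality, then the Cauchy--Schwarz/Wiener embedding $\norm{\hat u}_{\ell^1}\lesssim\Norm{u}{r}$ for $r>d/2$) is the standard elementary proof and is perfectly rigorous. The alternative you sketch via Bony's decomposition $fg=T_fg+T_gf+R(f,g)$ is exactly the approach the paper adopts for the closely related Lemma~\ref{SobolevMultiplication}, so it is the more ``in-house'' choice stylistically; it has the advantage of immediately generalizing to the asymmetric product estimate \eqref{eqn:SobolevProductEst}, whereas your direct argument is tailored to the symmetric algebra case. Either route is acceptable here.
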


While incredibly useful, the Sobolev algebra property will not suffice for our purposes. For we shall often want to either estimate products in a lower-regularity space or estimate products with rougher functions. To this end, we have the following result:
\begin{lemma}
\label{SobolevMultiplication}
Suppose that $f \in H^r(\T^d)$ and $g \in H^t(\T^d)$ with $r + t > 0$. Then, for all $s$ satisfying $s \leq \min(r,t)$ and $s < r + t - \frac{d}{2}$, we have $fg \in H^s$ with the following estimate:
\begin{equation}
\label{eqn:SobolevProductEst}
\Norm{fg}{s} \lesssim \Norm{f}{r}\Norm{g}{t}.
\end{equation}
\end{lemma}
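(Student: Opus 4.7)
The natural approach is Bony's paraproduct decomposition via the Littlewood-Paley calculus. Let $(\Delta_j)_{j \geq -1}$ denote the standard dyadic blocks on $\T^d$ and $S_j = \sum_{k < j} \Delta_k$ the associated partial-sum operators. Bony's formula gives
\begin{equation*}
fg = T_f g + T_g f + R(f,g), \qquad T_f g \coloneqq \sum_j S_{j-2} f \cdot \Delta_j g, \qquad R(f,g) \coloneqq \sum_{|j-k|\leq 1} \Delta_j f \cdot \Delta_k g,
\end{equation*}
and I would estimate the three pieces separately in $H^s$.

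For the paraproduct $T_f g$, each summand $S_{j-2} f \cdot \Delta_j g$ has Fourier support in an annulus of radius $\sim 2^j$, so almost-orthogonality of the Littlewood-Paley pieces applies and one has $\|T_f g\|_{H^s}^2 \lesssim \sum_j 2^{2js} \|S_{j-2}f\|_{L^\infty}^2 \|\Delta_j g\|_{L^2}^2$. If $r > d/2$, Sobolev embedding gives $\|S_{j-2}f\|_{L^\infty} \lesssim \|f\|_{H^r}$ and the estimate closes whenever $s \leq t$. If $r \leq d/2$, Bernstein's inequality gives instead $\|S_{j-2}f\|_{L^\infty} \lesssim 2^{j(d/2-r)}\|f\|_{H^r}$, and summing against $2^{2jt}\|\Delta_j g\|_{L^2}^2$ closes the estimate under the combined conditions $s \leq t$ and $s \leq r + t - d/2$. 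A symmetric argument handles $T_g f$, requiring $s \leq r$ (and again $s \leq r + t - d/2$ if $t \leq d/2$).

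The remainder $R(f,g)$ is the main obstacle, because each block $\Delta_j f \cdot \Delta_k g$ with $|j-k|\leq 1$ has Fourier support in a ball of radius $\sim 2^{\max(j,k)}$ rather than an annulus, so almost-orthogonality is lost. My plan is to regroup by dyadic output shell: for each $N$, only those terms with $\max(j,k) \gtrsim N$ contribute to $\Delta_N R(f,g)$, so
\begin{equation*}
\|\Delta_N R(f,g)\|_{L^2} \lesssim 2^{Nd/2} \sum_{j \gtrsim N} \|\Delta_j f\|_{L^2} \|\widetilde{\Delta}_j g\|_{L^2},
\end{equation*}
where the $2^{Nd/2}$ comes from Bernstein's inequality applied to $\Delta_N$ acting on a function whose Fourier support sits in the ball of radius $\sim 2^j$, and the last sum comes from Cauchy--Schwarz. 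Multiplying by $2^{Ns}$, using the bounds $\|\Delta_j f\|_{L^2} \lesssim 2^{-jr}\|f\|_{H^r}$ and $\|\widetilde{\Delta}_j g\|_{L^2} \lesssim 2^{-jt}\|g\|_{H^t}$, and then summing first in $j \gtrsim N$ and then in $N$ produces a geometric series whose convergence depends precisely on the strict inequality $s < r + t - d/2$. This is where the hypothesis $r + t > 0$ enters as well, since it guarantees that the $j$-sum converges.

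Combining the three estimates yields $\|fg\|_{H^s} \lesssim \|f\|_{H^r}\|g\|_{H^t}$ under the stated hypotheses. The genuinely delicate step is the remainder bound: both Bernstein's inequality and the strict inequality $s < r + t - d/2$ are indispensable there, whereas the paraproduct estimates are essentially bookkeeping once the low/high structure of the Fourier supports is exploited.
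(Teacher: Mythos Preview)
Your approach is correct and is precisely the one the paper indicates: the paper's proof is a two-line sketch that invokes Bony's paraproduct decomposition $fg = T_f g + T_g f + R(f,g)$ and then refers to Theorem C.10 of Benzoni-Gavage--Serre for the details, which are exactly the three estimates you outline. Your identification of the roles of the hypotheses---$s \leq \min(r,t)$ for the paraproducts, and the strict inequality $s < r+t-\frac{d}{2}$ together with $r+t>0$ for the remainder---is accurate.
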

\begin{proof}
For proving results of this type, Bony's paradifferential analysis is an incredibly powerful tool. Namely, we can apply the paraproduct decomposition of \cite{Bon1}:
\begin{equation}
\label{eqn:ParaproductDecomp}
fg = T_fg + T_gf + R(f,g),
\end{equation}
See Theorem C.10 of \cite{B-GS} for details.
\end{proof}

We shall also make use of a Bernstein-type inequality regarding the effect of differentiation on band-limited functions.
\begin{lemma}
\label{Bernstein}
Define $A \coloneqq \set{\frac{1}{2} \leq \abs{\xi} \leq 2}$ and $B = \set{\abs{\xi} \leq 1}$. Take $\al \in \N_0^d$, $p \in \N_0$ and $u \in L^2(\R^d)$. Then, it holds that
\begin{align}
\supp \hat{u} \subset 2^p B \implies \pnorm{\p^\al u}{2} \lesssim 2^{p\abs{\al}}\pnorm{u}{2}, \label{eqn:Bernstein1}\\
\supp \hat{u} \subset 2^p A \implies \pnorm{\p^\al u}{2} \sim 2^{p\abs{\al}}\pnorm{u}{2}. \label{eqn:Bernstein2}
\end{align}
\end{lemma}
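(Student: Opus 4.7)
The plan is to translate both claims into pointwise bounds on Fourier multipliers via Plancherel's theorem, then exploit the compact support of $\hat u$ to make those multipliers bounded in sup-norm. Nothing deeper than this should be needed, since this is a standard Bernstein inequality.

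For the upper bounds in both \eqref{eqn:Bernstein1} and \eqref{eqn:Bernstein2}, I would start from the identity $\widehat{\p^\al u}(\xi) = (i\xi)^\al \hat u(\xi)$, which combined with Plancherel gives $\pnorm{\p^\al u}{2} = \pnorm{(i\xi)^\al \hat u}{2}$. Under either support hypothesis we have $\abs{\xi} \leq 2^{p+1}$ on $\supp \hat u$, so the pointwise bound $\abs{(i\xi)^\al} \lesssim 2^{p\abs{\al}}$ may be factored out of the $L^2$ integral to give $\pnorm{\p^\al u}{2} \lesssim 2^{p\abs{\al}} \pnorm{\hat u}{2} = 2^{p\abs{\al}} \pnorm{u}{2}$.

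For the reverse inequality in \eqref{eqn:Bernstein2}, the idea is to ``invert differentiation'' on the Fourier side, which is possible precisely because $\xi^\al$ is bounded away from zero on the annulus $2^p A$. Concretely, I would fix once and for all a smooth cut-off $\chi \in C_c^\infty(\R^d)$ with $\chi \equiv 1$ on $A$ and $\supp \chi \subset \set{\sfrac{1}{4} \leq \abs{\xi} \leq 4}$; then $\hat u(\xi) = \chi(\xi/2^p) \hat u(\xi)$, and multiplying and dividing by $(i\xi)^\al$ yields
\begin{equation*}
\hat u(\xi) = \frac{\chi(\xi/2^p)}{(i\xi)^\al}\, \widehat{\p^\al u}(\xi).
\end{equation*}
Since $\abs{\xi} \geq 2^{p-2}$ on the support of the prefactor, the multiplier $\chi(\xi/2^p)/(i\xi)^\al$ is bounded pointwise by $C_\al 2^{-p\abs{\al}}$, and a second application of Plancherel gives $\pnorm{u}{2} \lesssim 2^{-p\abs{\al}} \pnorm{\p^\al u}{2}$, which is the desired lower bound.

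The main (mild) obstacle is simply ensuring that the low-frequency pole of $(i\xi)^{-\al}$ is kept out of the picture; choosing the cut-off $\chi$ to be supported in an annulus bounded away from the origin makes $\chi(\xi)/(i\xi)^\al$ smooth and compactly supported, after which the scaling $\xi \mapsto \xi/2^p$ extracts exactly the factor $2^{-p\abs{\al}}$. No harmonic-analytic machinery beyond Plancherel is needed, since the multipliers here act boundedly on $L^2$ by virtue of being bounded Fourier multipliers.
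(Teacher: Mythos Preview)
The paper does not give its own proof; it simply cites Lemma~2.1 of \cite{BCD}. Your Plancherel argument is the standard one for the $L^2$ case and is essentially what appears in that reference.

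However, your reverse-inequality argument for \eqref{eqn:Bernstein2} has a real gap in dimensions $d \geq 2$. You assert that ``$\xi^\al$ is bounded away from zero on the annulus $2^p A$,'' but this is false: $\xi^\al = \xi_1^{\al_1}\cdots\xi_d^{\al_d}$ vanishes on the annulus wherever some $\xi_j = 0$ with $\al_j > 0$, so the multiplier $\chi(\xi/2^p)/(i\xi)^\al$ is unbounded and the division step is illegitimate. In fact the lower bound $\pnorm{\p^\al u}{2} \gtrsim 2^{p\abs{\al}}\pnorm{u}{2}$ is simply false for a single multi-index $\al$ when $d \geq 2$ (concentrate $\hat u$ near a point of the annulus where $\xi^\al$ vanishes), so the lemma as stated is already problematic there; the version actually proved in \cite{BCD} uses the full homogeneous norm $\sup_{\abs{\al}=k}\pnorm{\p^\al u}{2}$, for which your cut-off idea does work because $\abs{\xi}^k$ is genuinely bounded below on $A$. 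In $d=1$ --- the only case the paper needs --- your proof is correct as written, since then $\xi^\al = \xi^k$ does not vanish on the annulus.
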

\begin{proof}
See Lemma 2.1 in \cite{BCD}.
\end{proof}

\section{An Alternative Proof of Theorem \ref{CapEnergyEstimate1}}

Theorem \ref{CapEnergyEstimate1} is sufficient for the purpose of obtaining $\bigo(1)$ existence time for solutions of \eqref{eqn:ClamondToyScale}. However, in the $\al = \frac{1}{2}$ case, we were no longer able to use the vector field $\ee\p_t$, instead we utilized $iL + \chi_{\w}$. Here, our goal is to show that one can obtain the result of Theorem \ref{CapEnergyEstimate1} using the vector field $iL + \chi_{\w}$. So, both results can be obtained using a somewhat unified approach.

\begin{thm}
\label{ClamondToyCapEnergyEstimate2}
If $\al = \frac{3}{2}$, $v$ is a solution of \eqref{eqn:ClamondToyScale} with $\sigma \geq 3$ and $\Ec = \Ec(t)$ is given by Definition \ref{CapEnergy} with $Z = iL + \chi_{\w}$, then one has
\begin{equation*}
\frac{d\Ec}{dt} \lesssim \Ec.
\end{equation*}
\end{thm}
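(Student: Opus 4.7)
The plan is to mirror the proof of Theorem~\ref{GravEnergyEstimate}, which is already designed for the vector field $Z = P_L$, simplified to reflect the fact that for $\al = \sfrac{3}{2}$ the energy contains only three terms: I would write $\Ec(t) = \Eca{0}(t) + \Eca{1}(t) + \Eca{2}(t)$ and immediately dispatch $\Eca{0}$ via Lemma~\ref{ClamondToyL2Control}. The assumption $\sig \geq 3$ plus the equivalence $\Ec \sim \Norm{v}{3}^2$ (from $Z^2 v \in L^2 \Leftrightarrow v \in H^3$) guarantees that every term arising in the calculation is well-defined.

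For $k \in \{1,2\}$, the first step is to exploit that $Z = P_L$ trivially commutes with $\p_t$ and with itself, so that applying $Z^k$ to \eqref{eqn:ClamondToyScale} produces the clean evolution
\begin{equation*}
\p_t Z^k v + Z^k\bigl(W_\ee(v)\p_x v\bigr) + \frac{1}{\ee}\, P_L Z^k v = 0.
\end{equation*}
Pairing with $\overline{Z^k v}$, integrating over $\T$, and taking twice the real part, the $iL$ contribution vanishes because $L = \abs{D}^{3/2}$ is self-adjoint on $L^2$, while the $\chi_\w$ contribution gives the manifestly dissipative term $\frac{2}{\ee}\int \chi_\w \abs{Z^k v}^2\, dx \geq 0$, which can be discarded to yield an inequality.

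Next, to bound the nonlinear term, I would decompose
\begin{equation*}
Z^k\bigl(W_\ee(v)\p_x v\bigr) = \comm{Z^k}{W_\ee(v)}(\p_x v) + W_\ee(v)\,\p_x Z^k v + W_\ee(v)\comm{Z^k}{\p_x}(v).
\end{equation*}
The middle, transport-type piece, after pairing with $\overline{Z^k v}$, using that $W_\ee(v)$ is real-valued, and integrating by parts, collapses to $-\int \p_x W_\ee(v)\abs{Z^k v}^2\, dx$, which is controlled by \eqref{eqn:W_Estimate2} and $\Ec$. The two commutator pieces are exactly what Lemmas~\ref{PLCommutator1} and~\ref{PLCommutator2} were built for. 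With $s=0$ and $\al = \sfrac{3}{2}$, Lemma~\ref{PLCommutator1} bounds $\|\comm{Z^k}{\p_x}(v)\|_{L^2}$ by $\Norm{v}{(k-1)\al}$, i.e.\ $\pnorm{v}{2}$ or $\Norm{v}{\sfrac{3}{2}}$, and Lemma~\ref{PLCommutator2} bounds $\|\comm{Z^k}{W_\ee(v)}(\p_x v)\|_{L^2}$ by $\Norm{W_\ee(v)}{r}\Norm{\p_x v}{k\al-1}$ for any $r > \sfrac{3}{2}$ with $r \geq k\al$. For the worst case $k=2$ this reduces to $\Norm{W_\ee(v)}{3}\Norm{v}{3}$, which is absorbed by $\Ec$ thanks to \eqref{eqn:W_Estimate}. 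Cauchy--Schwarz against $\pnorm{Z^k v}{2}$ and summing over $k$ yields $\tfrac{d\Ec}{dt} \lesssim \Ec$.

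The only substantive obstacle relative to Theorem~\ref{CapEnergyEstimate1} is that $P_L$ no longer commutes with $\p_x$ or with multiplication, so the clean Leibniz manipulations available for $Z = \ee\p_t$ must be replaced by the commutator machinery of Section~3. The key sanity check is regularity arithmetic: with $k\al \leq 3$ throughout the range $k \leq 2$, the constraints $s + k\al \leq r$ and $r > \sfrac{3}{2}$ in Lemma~\ref{PLCommutator2} are easily met, and every term is controlled by the top-order norm $\Norm{v}{3} \sim \sqrt{\Ec}$. Because there is no delicate loss, no modified energy or normal form is needed, and the argument closes essentially immediately from Lemmas~\ref{PLCommutator1}--\ref{PLCommutator2}.
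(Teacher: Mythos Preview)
Your proposal is correct and follows essentially the same argument as the paper's proof in Appendix~B. Both proofs dispatch $\Eca{0}$ via Lemma~\ref{ClamondToyL2Control}, then for $k=1,2$ write $Z^k(W_\ee(v)\p_x v)$ as the sum of the commutator $\comm{Z^k}{W_\ee(v)}(\p_x v)$, the transport piece $W_\ee(v)\p_x Z^k v$, and the derivative commutator $W_\ee(v)\comm{Z^k}{\p_x}(v)$, handling the first and third via Lemmas~\ref{PLCommutator2} and~\ref{PLCommutator1} respectively and the transport piece by integration by parts; the only cosmetic difference is that the paper groups the last two pieces together as $D_k$ before splitting, whereas you write all three at once.
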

\begin{proof}
As in the proof of Theorem \ref{CapEnergyEstimate1}, write $\Ec(t) = \Eca{0}(t) + \Eca{1}(t) + \Eca{2}(t)$ and notice that, by Lemma \ref{ClamondToyL2Control}, we have
\begin{equation}
\label{eqn:AltGrav-CapEnergyEst1}
\frac{d\Eca{0}}{dt} \lesssim \Ec.
\end{equation}

We thus begin in earnest by considering the time derivative of $\Eca{k}$ for $k=1,2$:
\begin{equation*}
\frac{d\Eca{k}}{dt} = -\Int Z^k(W_\ee(v)\p_x v)Z^k\vbar + Z^kvZ^k(W_\ee(\vbar)\p_x\vbar) \ dx - \frac{2i}{\ee} \Int \abs{\sqrt{L}Z^kv}^2 \ dx - \frac{2}{\ee} \Int \chi_{\w}\abs{Z^kv}^2 \ dx.
\end{equation*}
As we've seen many times already, we can reduce this to
\begin{equation}
\label{eqn:ClamondToyGrav-CapE2TimeDeriv2}
\frac{d\Eca{k}}{dt} \leq -\Rea\set{\Int Z^k(W_\ee(v)\p_x v)Z^k\vbar + Z^kvZ^k(W_\ee(\vbar)\p_x\vbar) \ dx}.
\end{equation}

We now rewrite the integral on the right-hand side of \eqref{eqn:ClamondToyGrav-CapE2TimeDeriv2} by adding/subtracting and making note of the fact that $W_\ee(v) = W_\ee(\vbar)$:
\begin{align}
\label{eqn:ClamondToyGrav-CapE2Decomp}
\frac{d\Eca{k}}{dt} &\leq -\Rea\left\{\Int [Z^k(W_\ee(v)\p_x v) - W_\ee(v)Z^k\p_x v]Z^k\vbar + [Z^k(W_\ee(\vbar)\p_x\vbar) - W_\ee(\vbar)Z^k\p_x\vbar]Z^kv \ dx\right\} \nonumber\\
&\hspace{0.5cm} - \Rea\left\{\Int W_\ee(v)(Z^k\p_x vZ^k\vbar + Z^k\p_x\vbar Z^kv) \ dx \right\}\nonumber\\
&= C_k + D_k.
\end{align}
We begin by considering the commutator term $C_k$ for which we plainly have
\begin{equation}
\label{eqn:ClamondToyGrav-CapE21Est1}
C_k \lesssim \norm{\comm{Z^k}{W_\ee(v)}(\p_xv)}_\Lp{2}\norm{Z^kv}_\Lp{2}.
\end{equation}
We can apply Lemma \ref{PLCommutator2} to estimate the right-hand side of \eqref{eqn:ClamondToyGrav-CapE21Est1}:
\begin{equation}
\label{eqn:ClamondToyGrav-CapE21Est}
C_k \lesssim \Norm{W_\ee(v)}{r}\Norm{\p_xv}{\sfrac{3k}{2}-1}\norm{Z^kv}_\Lp{2}.
\end{equation}
We will either take $r=\frac{3}{2}+$ for $k=1$ or $r=3$ for $k=2$. In addition, $\frac{3k}{2}-1 \leq 2$ and so the highest Sobolev norm of $v$ appearing is $\Norm{\p_xv}{2} \leq \Norm{v}{3}$. Thus, the energy estimate closes and we have
\begin{equation}
\label{eqn:CapCjEst}
C_k \lesssim \Norm{\p_xv}{\sfrac{3k}{2}-1}\norm{Z^kv}_\Lp{2} \lesssim \Ec.
\end{equation}

We rewrite $D_k$ by commuting $Z$ with $\p_x$ and integrating by parts, which yields
\begin{equation}
\label{eqn:ClamondToyGrav-CapE22}
D_k = \Int \p_xW_\ee(v)\cdot\abs{Z^kv}^2 \ dx - \Rea\left\{\Int W_\ee(v)\comm{Z^k}{\p_x}(v)Z^k\vbar + W_\ee(v^*)\comm{Z^k}{\p_x}(\vbar)Z^kv \ dx\right\}.
\end{equation}
Hence, we have the estimate
\begin{equation}
\label{eqn:ClamondToyGrav-CapE22Est1}
D_k \lesssim \norm{\p_x W_\ee(v)}_\Lp{\infty}\norm{Z^kv}_\Lp{2}^2 + \norm{W_\ee(v)}_\Lp{\infty}\norm{\comm{Z^k}{\p_x}(v)}_\Lp{2}\norm{Z^kv}_\Lp{2}.
\end{equation}
We apply the derivative commutator estimate of Lemma \ref{PLCommutator1} with $\al = \frac{3}{2}$ to bound the commutator term. This gives control via the energy:
\begin{equation}
\label{eqn:ClamondToyGrav-CapE22Est}
D_k \lesssim \norm{Z^kv}_\Lp{2}^2 + \Norm{v}{\sfrac{3k}{2}-\sfrac{3}{2}}\norm{Z^kv}_\Lp{2} \lesssim \Ec.
\end{equation}
We are able to close the above estimate as $\frac{3k}{2} - \frac{3}{2} \leq \frac{3}{2} < 2$ given that $k=2$ is the worst-case scenario. Ergo, upon combining \eqref{eqn:ClamondToyGrav-CapE21Est} and \eqref{eqn:ClamondToyGrav-CapE22Est}, we obtain control of $\frac{d\Eca{k}}{dt}$:
\begin{equation}
\label{eqn:ClamondToyGrav-CapE2Est2}
\frac{d\Eca{k}}{dt} \lesssim \Ec.
\end{equation}
This gives us the desired estimate:
\begin{equation}
\label{eqn:ClamondToyGrav-CapEnergyEstimate}
\frac{d\Ec}{dt} \lesssim \Ec.
\end{equation}

\end{proof}


\begin{thebibliography}{1}





\bibitem{Ala6} T. Alazard, Low Mach number limit of the full Navier-Stokes equations, Arch. Rational Mech. Anal. \textbf{180} (2006) 1-73.

\bibitem{Ala2} T. Alazard, Stabilization of the water-wave equations with surface tension, Ann. PDE \textbf{3}:17 (2017).

\bibitem{Ala3} T. Alazard, Stabilization of gravity water waves, J. Math. Pures Appl. \textbf{114} (2018) 51-84.

\bibitem{Ala8} T. Alazard, Boundary observability of gravity water waves, Ann. I. H. Poincar\'{e} \textbf{35} (2018) 751-779.

\bibitem{ABH} T. Alazard, P. Baldi and D. Han-Kwan, Control of water waves, J. Eur. Math. Soc. \textbf{20} (2018) 657-745.

\bibitem{ABZ1} T. Alazard, N. Burq and C. Zuily, On the water-waves equations with surface tension, Duke Math. J. \textbf{158} (2011) 413-499.


\bibitem{ABZ3} T. Alazard, N. Burq and C. Zuily, On the Cauchy problem for gravity water waves, Invent. Math. \textbf{198} (2014) 71-163.



\bibitem{AD2} T. Alazard and J.-M. Delort, Global solutions and asymptotic behavior for two dimensional gravity water waves, Ann. Sci. \'{E}c. Norm. Sup. (4) \textbf{48} (2015) 1149-1238.

\bibitem{AM} T. Alazard and G. M\'{e}tivier, Paralinearization of the Dirichlet-to-Neumann operator, and regularity of three-dimensional water waves, Comm. Partial Differential Equations \textbf{34} (2009) 1632-1704.

\bibitem{Ali1} S. Alinhac, Paracomposition et op\'{e}rateurs paradiff\'{e}rentiels, Comm. Partial Differential Equations \textbf{11} (1986) 87-121.

\bibitem{Ali2} S. Alinhac, Existence d'ondes de rar\'{e}faction pour des syst\`{e}mes quasilin\'{e}aires hyperboliques multidimensionnels, Comm. Partial Differential Equations, \textbf{14} (1989) 173-230.


\bibitem{AlvLan1} B. Alvarez-Samaniego and D. Lannes, Large time existence for 3D water-waves and asymptotics, Invent. Math. \textbf{171} (2008) 485-541.










\bibitem{BCD} H. Bahouri, J.-Y. Chemin and R. Danchin, {\em Fourier Analysis and Nonlinear Partial Differential Equations,} Grundlehren der mathematischen Wissenschaften 343 (Springer, Heidelberg, 2011).

\bibitem{BMO1} G. Baker, D. Meiron and S. Orszag, Generalized vortex methods for free surface flow problems II: Radiating waves, J. Sci. Comp. \textbf{4} (1989) 237-259.

\bibitem{BLR1} C. Bardos, G. Lebeau and J. Rauch, Sharp sufficient conditions for the observation, control, and stabilization of waves from the boundary, SIAM J. Control Optim. \textbf{30} (1992) 1024-1065.

\bibitem{B-GS} S. Benzoni-Gavage and D. Serre, {\em Multi-dimensional Hyperbolic Differential Equations: First-order Systems and Applications,} (Oxford Univ. Press, Oxford, 2007).

\bibitem{BD1} M. Berti and J.-M. Delort, {\em Almost Global Solutions of Capillary-Gravity Water Waves Equations on the Circle,} Lecture Notes of the Unione Matematica Italiana 24 (Springer, Cham, 2018).




\bibitem{BG} K. Beyer and M. G\"{u}nther, On the Cauchy problem for a capillary drop. Part I: Irrotational motion, Math. Meth. Appl. Sci. \textbf{21} (1998) 1149-1183.

\bibitem{Bonn1} F. Bonnefoy, {\em Mod\'{e}lisation Exp\'{e}rimentale et Num\'{e}rique des \'{E}tats de Mer Complexes,} Ph.D. thesis, Universit\'{e} de Nantes, 2005.

\bibitem{Bon1} J.-M. Bony, Calcul symbolique et propagation des singularit\'{e}s pour les \'{e}quations aux d\'{e}riv\'{e}es partielles non lin\'{e}aires, Ann. Sci. \'{E}c. Norm. Sup. (4) \textbf{14} (1981) 209-246.










\bibitem{ClaEtAl} D. Clamond, D. Fructus, J. Grue and {\O}. Kristiansen, An efficient model for three-dimensional surface wave simulations. Part II: Generation and absorption, J. Comput. Phys. \textbf{205} (2005) 686-705.

\bibitem{Clem1} A. Cl\'{e}ment, Coupling of two absorbing boundary conditions for $2D$ time-domain simulations of free surface gravity waves, J. Comput. Phys. \textbf{126} (1996) 139-151.






\bibitem{CoSh1} D. Coutand and S. Shkoller, Well-posedness of the free-surface incompressible Euler equations with or without surface tension, J. Amer. Math. Soc. \textbf{20} (2007) 829-930.



\bibitem{CS} W. Craig and C. Sulem, Numerical simulation of gravity waves, J. Comput. Phys. \textbf{108} (1993) 73-83.





\bibitem{DDZ1}F. Dias, A. Dyachenko and V. Zakharov, Theory of weakly damped free-surface flows: A new formulation based on potential flow solutions, Phys. Lett. A \textbf{372} (2008) 1297-1302.

\bibitem{Duc1} G. Ducrozet, {\em Mod\'{e}lisation des Processus Non-Lin\'{e}aires de G\'{e}n\'{e}ration et de Propagation d'\'{E}tats de Mer par une Approche Spectrale,} Ph.D. thesis, Universit\'{e} de Nantes; Ecole Centrale de Nantes, 2007.







\bibitem{GMS2} P. Germain, N. Masmoudi and J. Shatah, Global existence for capillary water waves, Comm. Pure Appl. Math. \textbf{68} (2015) 625-687.







\bibitem{HIT1} J. Hunter, M. Ifrim and D. Tataru, Two dimensional water waves in holomorphic coordinates, Commun. Math. Phys. \textbf{346} (2016) 483-552.




\bibitem{IT3} M. Ifrim and D. Tataru, Two-dimensional gravity water waves with constant vorticity I: Cubic lifespan, Anal. PDE \textbf{12} (2019) 903-967.


\bibitem{Igu1} T. Iguchi, Well-posedness of the initial value problem for capillary-gravity waves, Funkcial. Ekvac. \textbf{44} (2001) 219-241.





\bibitem{IonPus5} A. Ionescu and F. Pusateri, Long-time existence for multi-dimensional periodic water waves, Geom. Funct. Anal. \textbf{29} (2019) 811-870.

\bibitem{IsrOrs1} M. Israeli and S. Orszag, Approximation of radiation boundary conditions, J. Comput. Phys. \textbf{41} (1981) 115-135.

\bibitem{JenEA1} G. Jennings, S. Karni and J. Rauch, Water wave propagation in unbounded domains. Part I: Nonreflecting boundaries, J. Comput. Phys. \textbf{276} (2014) 729-739.


\bibitem{KN} T. Kano and T. Nishida, Sur les ondes de surface de l'eau avec une justification math\'{e}matique des \'{e}quations des ondes en eau peu profonde, J. Math. Kyoto Univ. \textbf{19} (1979) 335-370.

\bibitem{Kato1} T. Kato, The Cauchy problem for quasi-linear symmetric hyperbolic systems, Arch. Rational Mech. Anal. \textbf{58} (1975) 181-205.

\bibitem{Kato2} T. Kato, Nonstationary flows of viscous and ideal fluids in $\R^3$, J. Funct. Anal. \textbf{9} (1972) 296-305.



\bibitem{Lan2} D. Lannes, Well-posedness of the water-wave equations, J. Amer. Math. Soc. \textbf{18} (2005) 605-654.

\bibitem{Lan1} D. Lannes, {\em The Water Waves Problem: Mathematical Analysis and Asymptotics,} Mathematical Surveys and Monographs 188 (Amer. Math. Soc., Rhode Island, 2013).

\bibitem{Lian1} J. Lian, Global well-posedness of the free-surface incompressible Euler equations with damping, J. Differential Equations \textbf{267} (2019) 1066-1094.








\bibitem{BMG1} B. M\'{e}sognon-Gireau, The Cauchy problem on large time for the water waves equations with large topography variations, Ann. I. H. Poincar\'{e} \textbf{34} (2017) 89-118.

\bibitem{Met1} G. M\'{e}tivier, {\em Para-differential Calculus and Applications to the Cauchy Problem for Nonlinear Systems,} CRM Series 5 (Edizioni della Normale, Pisa, 2008).







\bibitem{Nal} V.I. Nalimov, The Cauchy-Poisson problem, Din. Splo\v{s}n. Sredy \textbf{18} (1974) 104-210.

\bibitem{NgNi1}M. Ngom and D. Nicholls, Well-posedness and analyticity of solutions to a water wave problem with viscosity, J. Differential Equations \textbf{265} (2018) 5031-5065.






\bibitem{RT1} J. Rauch and M. Taylor, Exponential decay of solutions to hyperbolic equations in bounded domains, Indiana Univ. Math. J. \textbf{24} (1974) 79-86.















\bibitem{T2} M. Taylor, {\em Pseudodifferential Operators and Nonlinear PDE,} Progress in Mathematics 100 (Birkh\"{a}user, Boston, 1991).


\bibitem{T4} M. Taylor, Commutator estimates, Proc. Amer. Math. Soc. \textbf{131} (2002) 1501-1507.




\bibitem{Wu2} S. Wu, Well-posedness in Sobolev spaces of the full water wave problem in 2-D, Invent. Math. \textbf{130} (1997) 39-72.







\bibitem{Z} V. Zakharov, Stability of periodic waves of finite amplitude on the surface of a deep fluid, J. Appl. Mech. Tech. \textbf{9} (1968) 190-194.


\bibitem{Zhu1} H. Zhu, Control of three dimensional water waves, Arch. Rational Mech. Anal. \textbf{236} (2020) 893-966.

\end{thebibliography}
\end{document}